\theoremstyle{definition}
\newtheorem{defi}{Definition}[section]
\newtheorem{eg}[defi]{Example}
\newtheorem{rem}[defi]{Remark}
\newtheorem{prop}[defi]{Proposition}
\newtheorem{cor}[defi]{Corollary}
\newtheorem{lem}[defi]{Lemma}
\newtheorem{theo}[defi]{Theorem}
\newcommand{\A}{\mathsf{A}}
\newcommand{\C}{\mathsf{C}}
\newcommand{\dgvect}{\mathsf{DGVect}(\mathsf{k})}
\newcommand{\dgcat}{\mathsf{DGCat}(\mathsf{k})}
\newcommand{\dgalg}{\mathsf{DGAlg}(\mathsf{k})}
\newcommand{\cdgalg}{\mathsf{CDGAlg}(\mathsf{k})}
\newcommand{\Hom}{\operatorname{Hom}}
\newcommand{\holim}{\operatorname{holim}}
\newcommand{\id}{\operatorname{id}}
\newcommand{\co}{\colon\thinspace}
\title{Homotopy characters as a homotopy limit}
\author{Sergey Arkhipov}
\author{Daria Poliakova}
\begin{document}
\maketitle
 \begin{abstract}
  For a Hopf DG-algebra corresponding to a derived algebraic group, we compute the homotopy limit of the associated cosimplicial system of DG-algebras given by the classifying space construction. The homotopy limit is taken in the model category of DG-categories. The objects of the resulting DG-category are Maurer-Cartan elements of $\operatorname{Cobar}(A)$, or 1-dimensional $A_\infty$-comodules over $A$. These can be viewed as characters up to homotopy of the corresponding derived group. Their tensor product is interpreted in terms of Kadeishvili's multibraces. We also study the coderived category of DG-modules over this DG-category.
\end{abstract}
 \tableofcontents
\section{Introduction}
The note is devoted to an explicit calculation of a homotopy limit for a certain cosimplicial diagram in the model category of DG-categories. Recall that a general construction for  representatives of such  derived limits was given in the papers \cite{BHW} and \cite{AO2}. Below we consider a baby example where the answer appears to be both explicit and meaningful. \\

Let us illustrate our answer in an important special case. Take the Hopf algebra $A$ of regular functions on an affine algebraic group $G$. The cosimplicial system we consider is given basically by the simplicial scheme $X_\bullet$ realizing $BG$. Notice that if we considered the DG-categories of quasicoherent sheaves on $X_n$ and passed to the homotopy limit, the resulting DG-category would have been a model for the derived category of quasicoherent sheaves on the classifying space  $BG$ which is known to be equivalent to the derived category of representations of $G$. \\

Our task is different: we treat the (DG)-algebras of regular functions on $X_n$ as DG-categories with one object and consider the corresponding homotopy limit. We prove that it is equivalent to an interesting subcategory in the category of representations up to homotopy introduced earlier by Abad, Crainic, and Dherin (see \cite{ACD}): the (non-additive) DG-category of {\em characters up to homotopy} of the group $G$ also known as the DG-category of Maurer-Cartan elements in the Cobar construction for the coalgebra of functions on $G$.\\

The obtained answer illustrates a delicate issue: taking homotopy limit of a diagram of DG-categories does not commute with the (infinity-) functor $\A\mapsto \mathsf{DGMod}(\A)$.  Namely, passing to the categories of modules levelwise and then considering the homotopy limit would have produced the DG-category of quasicoherent sheaves on $BG$. Yet applying $\mathsf{DGMod}(\ldots)$ to the DG-category of homotopy characters we get a different category. \\

However,  if we replace the {\em derived categories} of DG-modules by the coderived ones,  this difference of the answers vanishes: the coderived category of DG-modules over the DG-category of homotopy characters for $G$ is quasi-equivalent to the coderived category of DG-modules over endomorphisms of the trivial character. By Positselski Koszul duality, the latter category is quasi-equivalent to the coderived category of representations for $G$.  \\

We conclude the paper by constructing an associative tensor product of objects in the DG-category of characters up to homotopy (in the generality of a DG-Hopf algebra, since we never use commutativity of the algebra in our considerations). Recall that Abad, Crainic and Dherin also constructed a homotopy monoidal structure on their category of representations up to homotopy (see \cite{ACD}). Our answer agrees with theirs. We interpret this answer in terms of Kadeishvili's multibraces. \\

Notice that there is no expectation to produce a honest associative tensor product of morphisms before passing to the homotopy category. Instead we plan to produce a homotopy coherent data descending to this structure after taking homology. This is work in progress. \\

\subsection*{Organization of the paper} In Section \ref{prelim} we give preliminaries on model categories, DG-modules and Cobar-constructions. In Section \ref{system} we introduce the cosimplicial system of interest, state its homotopy limit in the category of DG-algebras, and give the first description of its homotopy limit in the category of DG-categories. In Section \ref{mcsection} we interpret this result in terms of Maurer-Cartan elements in Cobar-construction. In Section \ref{comorita} we explain the coMorita equivalence between the homotopy limit taken in the category of DG-algebras and the homotopy limit taken in the category of DG-categories. In Section \ref{homchar} we reinterpret the homotopy limit category in terms of representations up to homotopy in the sense of \cite{AC}. In Section \ref{monoidal} we discuss the monoidal structure (as in \cite{ACD}) and how it is connected to Kadeishvili's multibraces. Finally in Appendix \ref{appen} we provide a detailed computation of the same homotopy limit in the category of DG-algebras, by means of simplicial resolutions. 
\subsection*{Acknowledgements} We are grateful to Leonid Positselski for many enlightening comments, in particular for sharing the proof of Lemma \ref{co} with us. The second author would like to thank Timothy Logvinenko for inviting her to present an early version of this project at GiC seminar in Cardiff, and Ryszard Nest for useful discussions. The  second  author  was  supported  by  the  Danish  National Research Foundation through the Centre for Symmetry and Deformation (DNRF92).

\section{Preliminaries}
\label{prelim}
\subsection{Model categories involved}
The category of DG-algebras $\dgalg$ is equipped with projective model structure which is right-transferred from the category of chain complexes along the adjunction ``tensor algebra functor/forgetful functor". The weak equivalences are the quasiisomorphisms, the fibrations are the surjections, and the cofibrations are defined by the left lifting property. \\


In this paper, we mostly work with more general objects. Recall that a DG-category is by definition a category enriched over the monoidal category of complexes of vector spaces, denoted by $\dgvect$. Every DG-algebra is a DG-category with one object. We denote the category of small DG-categories and DG-functors over a field $\mathsf{k}$ by $\dgcat$. Tabuada constructed a model category structure on $\dgcat$, with weak equivalences being quasi-equivalences of DG-categories (see \cite{Tab}).\\

For an arbitrary model category $\mathsf{C}$, the category $\mathsf{C}^{\Delta^{opp}}$ is equipped with Reedy model structure (see \cite{Hov} or \cite{Hir}).

\subsection{DG-modules} A DG-functor from a DG-category $\A$ with values in the DG-category $\dgvect$ is called an $\A$-DG module. Notice that this agrees with the definition of a DG-module over a DG-algebra. The DG-category of $\A$-DG-modules is denoted by $\mathsf{DGMod}(\A)$.\\

\subsection{Cobar-constructions} In our paper we will be dealing with two sorts of Cobar-construction for DG-coalgebras. In the first construction, the complex happens to be acyclic whenever the coalgebra is counital; conceptually it is a cofree resolution of the coalgebra as a comodule over itself. In the second construction, the coaugmentation of the coalgebra provides boundary terms for the differential; the resulting complex is quasiisomorphic to what is known as {\emph reduced} Cobar-construction, and it is similar to the standard complex computing $\operatorname{Cotor}^C(\mathsf{k},\mathsf{k})$. Note however, that in this note we are using products not sums. Let us give the definitions and the notation.

\begin{defi}
Let $C$ be (not necessarily counital or coaugmented) DG-coalgebra. As a graded  vector space, 
$$\operatorname{Cobar}(C) = \widehat{T}(C[-1]) = \prod_{i=0}^\infty C[-1]^{\otimes i}$$
The multiplication is that of a complete tensor algebra. The differential is given by $d = d_C + \Delta $ on generators and extends to the rest of the algebra by Leinbiz rule.
\end{defi}

\begin{rem}
If $C$ is counital, this Cobar construction is actually acyclic, with counit giving rise to a contraction.
\end{rem}

If C is coaugemented with coaugmentation $1\co \mathsf{k} \to C$, then there is the following modification.

\begin{defi}
As a graded algebra, $\operatorname{Cobar}_{\operatorname{coaug}}(C) \simeq \widehat{T}(C[-1])$ again. The differential is given by $d = d_C + \Delta + 1 \otimes \id - \id \otimes 1$ on generators and extends to the rest of the algebra by Leinbiz rule.
\end{defi}

\begin{rem}
In the coaugmented case, $1_{C}$ is a Maurer-Cartan element in $\operatorname{Cobar}(C)$, and the differential in the later construction is the differential in the former construction twisted by this Maurer-Cartan element.
\end{rem}

\section{The cosimplicial system}
\label{system}
Let $(A, m, 1, \Delta, \epsilon)$ be a (unital, counital)  DG-bialgebra. Informally, in the case when $A$ is commutative we should view it as the algebra of functions on a derived affine algebraic group scheme. Notice however that we never use commutativity of $A$ in our main statements. \\

Consider the cosimplicial system $A^\bullet$ of DG-algebras corresponding to the classifying space construction:
\begin{equation}
\label{eq:cosys}
\xymatrix {k \ar@<-.5ex>[r] \ar@<.5ex>[r] & A \ar@<-1ex>[r] \ar[r] \ar@<1ex>[r] & A^{\otimes 2} & \cdots } 
\end{equation}

Let $\partial^i_n$ denote the face map $A^{\otimes n} \to A^{\otimes n+1}$ and $s^i_n$ denote the degeneracy map $A^{\otimes n} \to A^{\otimes n-1}$. Then in the system above
with faces and degeneracies given by
$$\partial^i_n = \begin{cases} 1 \otimes \id^{\otimes n} & i = 0 \\
\id^{\otimes i-1} \otimes \Delta \otimes \id^{\otimes n-i} & 0 < i < n+1 \\
\id^{\otimes n} \otimes 1 & i = n+1 \end{cases}$$

$$s^i_n = \id^{\otimes i} \otimes \epsilon \otimes \id^{\otimes n-i-1}$$

There are several homotopy limit computations that can be done in relation to system \eqref{eq:cosys}:
\begin{enumerate}[label=(\alph*)]
    \item One can compute the homotopy limit in the category of DG-algebras
    \item One can view every DG-algebra as a DG-category with one object and compute the homotopy limit in the category of DG-categories
    \item One can apply DG-Mod functor and compute the homotopy limit of this new system of DG-categories.
\end{enumerate}

The answer to (a) is folklore. The homotopy limit of the cosimplicial system is given by reduced Cobar-construction of the corresponding coaugmented DG-coalgebra. We were not able to locate the proof of this statement in the literature, thus we reproduce it in Appendix \ref{appen}.\\

In this paper we mainly discuss the answer to (b). The comparison between (b) and (c) is discussed in Section \ref{comorita}.\\

In the papers \cite{BHW} and \cite{AO2} the authors realized homotopy limits in $\dgcat^{\Delta^{\operatorname{op}}}$ as derived totalizations. Below we cite Prop. 4.0.2 from \cite{AO2}, with formulas written in their most explicit form. To simplify the notation, we denote by $\partial^{(i_1\ldots i_k)}$ an inclusion with image $i_1, \ldots, i_k$.
\begin{theo}
\label{ao}
For $\C^\bullet$ a cosimplicial system of DG categories, an object of $\holim \C$ is the data of $(X,a=\{a_i\}_{i \geq 1})$, where $X$ is an object of $\C^0$ and $a_i \in \Hom_{\C^i}^{1-i}(d^{(0)}X,d^{(n)}X)$ with $a_1$ homotopy invertible and subject to
\begin{equation}
\label{eq:object}
\begin{split}
   & d(a_n) = -\sum_{k=1}^{n-1}(-1)^{n-k} \partial^{(k \ldots n)} (a_{n-k}) \circ \partial^{(0 \ldots k)}(a_k) \\ & +\sum_{k=1}^{n-1}(-1)^{n-k} \partial^{(0 \ldots \hat{k} \ldots n )}(a_{n-1}).
\end{split}
\end{equation} 
The complex of morphisms between $(X,a)$ and $(Y,b)$ in degree $m$ is given by
$$\Hom^{m}((X,a),(Y,b)) = \prod_{i = 0}^\infty \Hom_{\C^i}^{m-i}(\partial^{(0)}(X),\partial^{(i)}(Y))$$
where we read $\partial^{(0)} \co \C^0 \to \C^0$ as $\id_{\C^0}$. For $f = \{f_i\} \in \Hom^{m}((X,a),(Y,b))$ its differential is given by

\begin{equation}
\label{eq:aodiff}
\begin{split}
    & d(f)_n = d(f_n)+\sum (-1)^{n-k} \partial^{(k \ldots n)} (f_{n-k}) \circ \partial^{(0\ldots k)}(a_{k}) \\
& - \sum_{k=1}^{n-1} (-1)^{m(n-k+1)}\partial^{(k \ldots n)} (b_{n-k}) \circ \partial^{(0\ldots k)}(f_{k})  \\
& +\sum_{k=1}^{n-1}(-1)^{n-k+m} \partial^{(0 \ldots \hat{k} \ldots n )}(f_{n-1}).
\end{split}
\end{equation}

For $f \in \Hom^{m}((X,a),(Y,b))$ and $g \in \Hom^{l}((Y,b),(Z,c))$, their composition composition is given by 
\begin{equation}
\label{eq:aomul}
    (g \circ f)_n =  \sum_{k=0}^{n} (-1)^{m(n-k)} \partial^{(k \ldots n)}(g_{n-i}) \circ \partial^{(0\ldots k)}(f_k). 
\end{equation}
\qed
\end{theo}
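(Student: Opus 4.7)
The plan is to realize the homotopy limit as a derived totalization. Since $\dgcat^{\Delta^{\operatorname{op}}}$ admits the Reedy model structure and $\dgcat$ has functorial cofibrant cosimplicial frames of the point, one has a quasi-equivalence
\[
\holim \C^\bullet \;\simeq\; \int_{[n]\in\Delta} (\C^n)^{\D_n},
\]
where $\D_\bullet$ is such a frame and the cotensor is computed in $\dgcat$. Concretely, I would take $\D_n$ to be the DG-category with objects $0,\ldots,n$ whose morphism complex $\Hom^\bullet_{\D_n}(i,j)$ is the normalized chain complex of the nerve of the subposet $\{i, i+1, \ldots, j\}$, so that $\Hom^{1-k}_{\D_n}(i,j)$ is freely spanned by strictly increasing sequences $i = j_0 < j_1 < \cdots < j_k = j$. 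Composition is concatenation of sequences, and the differential is the signed sum of inner deletions; this $\D_\bullet$ is a cofibrant cosimplicial resolution of the point in $\dgcat$.

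First, I would unpack a DG-functor $F \co \D_n \to \C^n$ as a choice of object $X = F(0) \in \C^n$ together with higher morphisms $F(i_0 < \cdots < i_k)$ of degree $1-k$, subject to a single Maurer-Cartan-type equation enforced by the differential and composition in $\D_n$. Imposing the end condition, i.e.\ compatibility with all cosimplicial face and degeneracy maps, forces the data on every simplex to be determined by the top-dimensional simplex $0 < 1 < \cdots < n$ via pushforward along the face inclusions $\partial^{(0\ldots k)}$ and $\partial^{(k \ldots n)}$. Collecting the independent data across $n$ yields exactly $(X, \{a_i\})$ in the stated degrees satisfying \eqref{eq:object}; the homotopy invertibility of $a_1$ is precisely the condition that identifies two objects related by a levelwise map in $\C^0 \to \C^1$ after passing to the homotopy limit.

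Second, the morphism complex and composition are derived in parallel fashion. A degree-$m$ map of totalization objects is a compatible collection of degree-$m$ natural transformations between the DG-functors $\D_n \to \C^n$ representing $(X,a)$ and $(Y,b)$, which unpacks, after the end condition, into the product $\prod_{i \geq 0}\Hom^{m-i}_{\C^i}(\partial^{(0)} X, \partial^{(i)} Y)$. The formulas \eqref{eq:aodiff} and \eqref{eq:aomul} drop out of the differentials and compositions inside the $\D_n$; the twisting terms involving $a$ and $b$ appear because the object data already twist the ambient differential in the cotensor, in direct analogy with the coaugmented Cobar-construction introduced in the preliminaries.

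The main obstacle is the sign-bookkeeping. Translating Koszul signs from degree shifts, simplicial alternating signs from face deletions, and the interchange signs between composition and shift into the precise recursive formulas is delicate, and verifying $d^2 = 0$ in \eqref{eq:aodiff} and associativity of \eqref{eq:aomul} by hand is the bulk of the work. I would organize this by observing that the whole totalization is a twisted cobar-type object on the cosimplicial $\dgcat$, so the signs are universally determined by the associative, coassociative compatibility that already underlies the usual Cobar-construction; the verification then reduces to a low-dimensional base case together with an inductive step matching the recursion in the index $n$.
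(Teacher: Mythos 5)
The paper does not actually prove this statement: Theorem \ref{ao} is quoted verbatim (with a \qed) from Prop.\ 4.0.2 of \cite{AO2}, and the strategy you outline --- cotensoring each $\C^n$ with a DG-categorical cofibrant resolution $\D_n$ of the point whose morphism complexes are normalized chains on subposets of $[n]$, then taking the (fat) end --- is precisely the strategy of \cite{AO2} and \cite{BHW}, echoed in the paper's Appendix for the easier case of $\dgvect$ and $\dgalg$. So your route is the intended one, but as written it has two genuine gaps beyond the sign bookkeeping you defer.

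First, the identification $\holim \C^\bullet \simeq \int_{[n]\in\Delta}(\C^n)^{\D_n}$ does not follow formally from ``$\D_\bullet$ is a cofibrant cosimplicial frame,'' because $\dgcat$ with the Tabuada model structure is not a monoidal model category: the DG-functor category $\mathrm{Fun}(\D_n,\C^n)$ is not automatically a derived cotensor, and the Bousfield--Kan formula needs the resulting cosimplicial object to be Reedy fibrant (this is exactly the point the paper's Appendix has to verify by hand in $\dgvect$ and $\dgalg$, and the point where \cite{BHW} and \cite{AO2} do real work). Your proposal asserts the quasi-equivalence in the first display without addressing this. Second, the homotopy invertibility of $a_1$ does not ``drop out'' of the end condition: the strict end contains all twisted-complex-like objects with $a_1$ arbitrary, and one must \emph{restrict} to the full subcategory where $a_1$ is a homotopy equivalence in order for the totalization to have the homotopy type of the homotopy limit. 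Your one-line justification of this condition is too vague to detect that it is an imposed correction rather than a consequence of naturality. With those two points supplied (or with an honest citation of \cite{AO2}, as the paper itself does), the remaining unpacking of objects, Hom-complexes, differentials and compositions is the routine, if sign-heavy, translation you describe.
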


We now apply these formulas to the cosimplicial system \eqref{eq:cosys}. Note that while each category in \eqref{eq:cosys} has a single object, this would not hold for the homotopy limit, where the data of an object includes morphisms. Denote $\holim A^\bullet = : \mathfrak{A}$. 

\begin{theo}
An object $a$ in $\mathfrak{A}$ is an infinite sequence $\{a_i \}_{i \geq 1}$ with $ a_i \in (A^{\otimes i})^{1-i} $ and $a_1$ homotopy invertible, subject to relations
\begin{equation}
\label{eq:form}
\begin{aligned}
& d(a_1)=0 \\
& d(a_2) = a_1 \otimes a_1 - \Delta(a_1) \\
& \ldots \\
& d(a_n) = - \sum_{k=1}^{n-1} (-1)^{n-k} a_{n-k} \otimes a_{k} \\ & + \sum_{k=1}^{n-1} (-1)^{n-k}(\id^{\otimes k-1}\otimes \Delta \otimes \id^{\otimes n-k-1})(a_{n-1})\\
& \ldots
\end{aligned} 
\end{equation}
A morphism $f\co a \to b$ of degree $m$ is also an infinite sequence $\{f_n \}_{n \geq 0}$ with $f_n \in (A^{\otimes n})^{-n}$, with differential given by 
\begin{equation}
\label{eq:diff}
\begin{split}
& d(f)_n = d(f_n) + \sum_{k=1}^{n-1}(-1)^{n-k} a_k \otimes f_{n-k}  - \sum_{k=1}^{n-1} (-1)^{m(n-k+1)} f_i \otimes b_{n-k} \\
& + \sum_{k=1}^{n-1} (-1)^{n-k+m}(\id^{\otimes k-1}\otimes \Delta \otimes \id^{\otimes n-k-1})(f_{n-1})
\end{split}
\end{equation}
and composition given by
\begin{equation}
\label{eq:mult}
 (g \circ f)_n = \sum_{k=0}^n (-1)^{m(n-k)} g_n \otimes f_{n-k}   
\end{equation}

\end{theo}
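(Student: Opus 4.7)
The plan is to directly specialize the general formulas of Theorem \ref{ao} to the cosimplicial system $A^\bullet$ of \eqref{eq:cosys}. Each level $\C^i = A^{\otimes i}$ is a DG-algebra viewed as a DG-category with one object, so the object $X$ appearing in Theorem \ref{ao} is uniquely determined; an object of $\mathfrak{A}$ is therefore encoded by the tower $\{a_i\}_{i \geq 1}$ alone, with $a_i \in \Hom^{1-i}_{A^{\otimes i}}(*,*) = (A^{\otimes i})^{1-i}$ and $a_1$ homotopy invertible as in Theorem \ref{ao}. Similarly, a degree-$m$ morphism reduces to a sequence $\{f_n\}_{n \geq 0}$ with $f_n \in (A^{\otimes n})^{m-n}$.

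The core computation is to evaluate the composite coface maps $\partial^{(i_1 \ldots i_k)}$ appearing in \eqref{eq:object}, \eqref{eq:aodiff}, and \eqref{eq:aomul} for the specific face maps of \eqref{eq:cosys}. Direct inspection shows: the injection $\partial^{(k \ldots n)}\co A^{\otimes (n-k)} \to A^{\otimes n}$ with image $\{k, \ldots, n\}$ is the $k$-fold iterated $\partial^0$, hence sends $x \mapsto 1^{\otimes k} \otimes x$; dually, $\partial^{(0 \ldots k)}\co A^{\otimes k} \to A^{\otimes n}$ arises from iterated applications of the top face map and sends $y \mapsto y \otimes 1^{\otimes (n-k)}$; finally, each ``interior'' inclusion $\partial^{(0 \ldots \hat{k} \ldots n)}\co A^{\otimes (n-1)} \to A^{\otimes n}$ is the single face $\partial^k_{n-1} = \id^{\otimes (k-1)} \otimes \Delta \otimes \id^{\otimes (n-k-1)}$.

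Substituting these formulas into \eqref{eq:object}--\eqref{eq:aomul} reduces each composition $\phi \circ \psi$ to a product in the endomorphism algebra $A^{\otimes n}$. Because multiplication in $A^{\otimes n}$ is componentwise and the two factors $1^{\otimes k} \otimes x$ and $y \otimes 1^{\otimes (n-k)}$ have disjoint non-unital support, each such product collapses to the simple tensor $y \otimes x$ (up to the convention identifying composition in a one-object DG-category with multiplication or its opposite). Applying this collapse termwise to the sums in \eqref{eq:object}, \eqref{eq:aodiff}, and \eqref{eq:aomul} yields, respectively, the structure equations \eqref{eq:form}, the differential \eqref{eq:diff}, and the composition \eqref{eq:mult} of the statement.

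The main obstacle is purely bookkeeping: one must fix the convention under which $\circ$ in the one-object DG-category $A^{\otimes n}$ corresponds to multiplication in the endomorphism algebra, and then track the Koszul signs so that they match those of \eqref{eq:form}, \eqref{eq:diff}, and \eqref{eq:mult}. No new mathematical input beyond Theorem \ref{ao} is required; the statement is in effect a transcription of the abstract homotopy limit formulas into the concrete tensor-product language of $A^{\otimes n}$.
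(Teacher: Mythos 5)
Your proposal is correct and follows essentially the same route as the paper, namely a direct specialization of Theorem \ref{ao} to the system \eqref{eq:cosys}; you merely spell out the identification of the composite coface maps $\partial^{(0\ldots k)}$, $\partial^{(k\ldots n)}$, $\partial^{(0\ldots\hat{k}\ldots n)}$ and the resulting collapse of products, which the paper leaves implicit.
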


\begin{proof}
This is a straightforward application of Theorem \ref{ao}. As in the theorem, denote an object of the homotopy limit by $(X,a)$. In our cosimplicial system \eqref{eq:cosys}, $A^0 = \mathsf{k}$ has only one object, so $X = *$. Then the identities \eqref{eq:object} translate to \eqref{eq:form}, the formula for the differential \eqref{eq:aodiff} corresponds to \eqref{eq:diff}, and the formula for the composition \label{eq:aomult} corresponds to \eqref{eq:mult}.
\end{proof}

Below we present several interpretations of this data.

\section{Maurer-Cartan elements in Cobar}
\label{mcsection}

We interpret the homotopy limit category $\mathfrak{A}$ in terms of Cobar construction for the DG-coalgebra $A$.

\begin{prop}
The objects of $\mathfrak{A}$ are exactly the Maurer-Cartan elements of $\operatorname{Cobar}(A)$, with one extra condition that their first component is homotopy invertible.
\end{prop}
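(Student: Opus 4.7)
The plan is to recognize the relations \eqref{eq:form} as the expansion, tensor-level by tensor-level, of the single Maurer-Cartan equation $d(a) + a \cdot a = 0$ in $\operatorname{Cobar}(A)$. The proposition then follows once we also carry along the extra invertibility condition on $a_1$ inherited from Theorem \ref{ao}.

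First I would match the underlying graded data. An object $\{a_i\}_{i \geq 1}$ with $a_i \in (A^{\otimes i})^{1-i}$ assembles into $a = \sum_{i \geq 1} a_i \in \widehat{T}(A[-1]) = \operatorname{Cobar}(A)$; the shift ensures that $a_i \in A[-1]^{\otimes i}$ lies in total degree $1 - i + i = 1$, so $a$ is a degree $1$ element with vanishing scalar part (the degree $1$ piece of $\mathsf{k}$ is zero, so there is no $a_0$). This identification is a bijection of graded vector spaces.

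Next I would expand the MC equation levelwise. Using $d = d_A + \Delta$ on generators and the Leibniz rule, the $n$-th tensor component of $d(a)$ splits into $d_A(a_n)$ plus a sum $\sum_k \pm (\id^{\otimes k-1} \otimes \Delta \otimes \id^{\otimes n-k-1})(a_{n-1})$ with one term for each of the $n-1$ positions at which $\Delta$ can act on $a_{n-1}$; likewise the $n$-th component of $a \cdot a$, which is simply concatenation in the complete tensor algebra, equals $\sum_{k=1}^{n-1} \pm a_k \otimes a_{n-k}$. Matching the resulting equation with the $n$-th relation of \eqref{eq:form} then amounts to checking that the signs align.

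The main obstacle is precisely this sign-tracking. The signs $(-1)^{n-k}$ appearing in \eqref{eq:form} have to be recovered from three sources: the Koszul signs coming from the shift $A \mapsto A[-1]$ that places each $a_i$ in odd Cobar-degree, the signs produced by the Leibniz rule applied to the degree $+1$ derivation on the tensor algebra, and the sign convention on the induced coproduct on $A[-1]$. I expect this to be routine but the only step genuinely requiring care; once the signs agree, the proof is complete by simply retaining homotopy invertibility of $a_1$ as the extra condition on the Maurer-Cartan element.
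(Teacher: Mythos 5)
Your proposal is correct and follows the same route as the paper, whose entire proof is the one-line observation that the Maurer--Cartan equation $dx+\tfrac{1}{2}[x,x]=0$ in $\operatorname{Cobar}(A)$ unpacks, tensor-component by tensor-component, into the relations \eqref{eq:form}; your levelwise expansion of $d(a)+a\cdot a$ (equivalent to $\tfrac12[a,a]$ for a degree-$1$ element) is exactly that unpacking, spelled out. The sign bookkeeping you flag is indeed the only content, and the paper leaves it implicit just as you do.
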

\begin{proof}
The Maurer-Cartan  equation $dx+\frac{1}{2}[x,x]=0$ translates precisely into the formulas \eqref{eq:form}.
\end{proof} 

In any DG algebra $A$ a Maurer-Cartan element $c$ allows to twist the differential:
$$d_c(a) = d(a)+[c,a]$$
Denote the new algebra by $_cC_c$. For two Maurer-Cartan elements $c_1$ and $c_2$, denote by $_{c_1}C_{c_2}$ a complex obtained by considering $A$ with the new differential
\begin{equation}
\label{eq:twist}
d_{(c_1,c_2)}(a) = d(a) + c_1a - (-1)^{|a|}ac_2.
\end{equation}
This will not be a DG-algebra anymore (for the lack of multiplication satisfying the Leibniz rule), but it will be a $_{c_1}C_{c_1}$-$_{c_2}C_{c_2}$ DG-bimodule.

\begin{prop}
\label{hom}
In the DG-category $\mathfrak{A}$, the complex of morphisms
$$\mathfrak{A}(a,b) = {}_a\operatorname{Cobar}(A) _b.$$
\end{prop}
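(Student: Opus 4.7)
The plan is first to identify the underlying graded vector spaces of $\mathfrak{A}(a,b)$ and ${}_a\operatorname{Cobar}(A)_b$, and then to match their differentials term by term. By Theorem \ref{ao} applied to the cosimplicial system \eqref{eq:cosys}, the degree-$m$ part of $\mathfrak{A}(a,b)$ is $\prod_{n\geq 0}(A^{\otimes n})^{m-n}$. Since $\operatorname{Cobar}(A)=\widehat{T}(A[-1])$, its degree-$m$ part is $\prod_n (A[-1]^{\otimes n})^m = \prod_n (A^{\otimes n})^{m-n}$, and the bimodule twist \eqref{eq:twist} leaves the underlying graded vector space unchanged. So the identification at the graded level is immediate.

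For the differential, the twisted differential on ${}_a\operatorname{Cobar}(A)_b$ decomposes as the internal Cobar differential $d_C+\Delta$, extended by Leibniz, plus the twist $a\cdot(-)-(-1)^{|f|}(-)\cdot b$. Computing the degree-$n$ component of the first piece and using that multiplication in $\widehat{T}(A[-1])$ is concatenation, one recovers the first and the last summands of \eqref{eq:diff}: the internal part gives $d(f_n)$, while the coproduct applied to each tensor factor of $f_{n-1}$ produces $(\id^{\otimes k-1}\otimes\Delta\otimes\id^{\otimes n-k-1})(f_{n-1})$. Expanding the twist analogously, again as concatenation, yields $\sum_k a_k\otimes f_{n-k}$ and $-\sum_k f_k\otimes b_{n-k}$, matching the remaining two summands of \eqref{eq:diff}; here one uses that the MC elements $a$ and $b$ lie in the augmentation ideal of $\operatorname{Cobar}(A)$, so their $0$-th components vanish and the sums range over precisely the indices appearing in \eqref{eq:diff}.

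The main obstacle is the Koszul-sign bookkeeping arising from the $A[-1]$ shifts. I would not recompute these signs from scratch: the signs of $d_C+\Delta$ are already pinned down by the previous proposition, since its Maurer-Cartan equation reproduces \eqref{eq:form} verbatim, and the twist signs are then forced by the Leibniz rule relating multiplication and differential in $\operatorname{Cobar}(A)$, together with the requirement that in the case $a=b$ one recovers the DG-algebra structure on ${}_a\operatorname{Cobar}(A)_a$. A direct low-degree check, for instance $n=1,2$, suffices to confirm the pattern. Once the differentials match, the proposition is established; as an aside, one can verify that composition \eqref{eq:mult} coincides with tensor concatenation on the bimodules, so that the identification is furthermore compatible with the composition in $\mathfrak{A}$.
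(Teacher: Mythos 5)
Your proof is correct and follows the same route as the paper, which simply observes that the twisted differential \eqref{eq:twist} on ${}_a\operatorname{Cobar}(A)_b$ coincides term by term with \eqref{eq:diff}; you merely spell out the graded identification and the term-matching in more detail. (The compatibility with composition that you mention in passing is stated in the paper as a separate proposition immediately afterwards.)
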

\begin{proof}
The formula \eqref{eq:twist} for the twisted differential corresponds precisely to the formula \eqref{eq:diff}.
\end{proof}

So as a graded vector space, every $\mathfrak{A}(a,b)$ is always equal to $\operatorname{Cobar}(A)$.

\begin{prop}
Under this assignment, the composition $\mathfrak{A}(a,b) \otimes \mathfrak{A}(b,c) \to \mathfrak{A}(a,c)$ corresponds to the multiplication in $\operatorname{Cobar}(A)$.
\end{prop}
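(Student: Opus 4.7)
The plan is to verify that, under the identification $\mathfrak{A}(a,b) \cong \operatorname{Cobar}(A)$ from Proposition \ref{hom}, the composition formula \eqref{eq:mult} agrees with the concatenation product in $\widehat{T}(A[-1])$.

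First I would unpack the composition formula in our cosimplicial system. For \eqref{eq:cosys} the face inclusions act by $\partial^{(0 \ldots k)}(f_k) = f_k \otimes 1^{\otimes (n-k)}$ and $\partial^{(k \ldots n)}(g_{n-k}) = 1^{\otimes k} \otimes g_{n-k}$, and the composition $\circ$ in $\C^n$ is the DG-algebra product on $A^{\otimes n}$. Hence \eqref{eq:aomul} becomes a sum whose summands are products of the form $(1^{\otimes k} \otimes g_{n-k})\cdot(f_k \otimes 1^{\otimes (n-k)})$ inside $A^{\otimes n}$; evaluating each such product by the Koszul rule for tensor-product DG-algebras collapses it to a single block tensor supported at disjoint positions.

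Second, I would compute the concatenation product in $\operatorname{Cobar}(A) = \widehat{T}(A[-1])$ component by component. Multiplication in the tensor algebra $A[-1]^{\otimes(n-k)} \otimes A[-1]^{\otimes k} \to A[-1]^{\otimes n}$ is just juxtaposition of words and carries no sign of its own. Reading it as a map $A^{\otimes(n-k)} \otimes A^{\otimes k} \to A^{\otimes n}$ via the suspension isomorphism $A[-1]^{\otimes i} \cong A^{\otimes i}$ implicit in Proposition \ref{hom} introduces a Koszul sign from commuting the $k$ suspensions of $f_k$ past the $n-k$ algebra entries of $g_{n-k}$. Matching this with the signs collected in the first step yields exactly \eqref{eq:mult}.

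The main obstacle is bookkeeping the signs coming from three different sources --- the derived totalisation prefactor $(-1)^{m(n-k)}$ of \eqref{eq:aomul}, the Koszul rule for the tensor-product algebra structure on $A^{\otimes n}$, and the suspension identification $A[-1]^{\otimes i} \cong A^{\otimes i}$ --- and verifying that their combination reproduces the combinatorial sign appearing in the concatenation product of $\widehat{T}(A[-1])$. Once the check is done on homogeneous components, both sides extend by bilinearity and completion, so the composition in $\mathfrak{A}$ is identified with the multiplication in $\operatorname{Cobar}(A)$.
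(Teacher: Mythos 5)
Your proposal is correct and takes essentially the same approach as the paper, whose entire proof is the one-line observation that the composition formula \eqref{eq:mult} \emph{is} the concatenation product in $\widehat{T}(A[-1])$; you simply spell out the identification of the face maps, the Koszul signs, and the suspension isomorphism that the paper leaves implicit. The only point to watch is the ordering of the two blocks ($f_k$ versus $g_{n-k}$) in the resulting word, which must be matched against the convention for $g\circ f$ --- but this is within the noise of the typos already present in \eqref{eq:aomul} and \eqref{eq:mult}.
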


\begin{proof}
This is the formula \eqref{eq:mult}.
\end{proof}

In $\operatorname{Cobar}(A)$, there is a distinguished nontrivial Maurer-Cartan element, namely, $1_A \in A$. Denote the corresponding object of $\mathfrak{A}$ by $\mathbb{I}$. Its endomorphisms are ${_{1_A}} \operatorname{Cobar}(A)_{1_A} \simeq \operatorname{Cobar}_{\operatorname{coaug}}(A)$. As explain in Appendix \ref{appen}, this is a model for the homotopy limit of our cosimplicial system but taken in the category $\dgalg$. \\

Recall the notion of gauge equivalence for Maurer-Cartan elements.

\begin{defi}
In a DG-algebra $A$, the gauge action of a degree $0$ invertible element $f$ on a Maurer-Cartan element $a$ is given by 
$$f.a = faf^{-1}+fd(f^{-1}).$$
\end{defi}

One checks that this is again a Maurer-Cartan element. Two Maurer-Cartan elements are called gauge equivalent if they belong to the same orbit of gauge action.

\begin{prop}
Gauge equivalent Maurer-Cartan elements of $\operatorname{Cobar}(A)$ are strictly isomorphic as objects of $\mathfrak{A}$.
\end{prop}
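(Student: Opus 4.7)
\emph{Proof plan.} The plan is to check that the gauge transformation $f$ itself (together with its algebraic inverse $f^{-1}$) already serves as a strict isomorphism between $a$ and $b$ in $\mathfrak{A}$, leveraging Proposition~\ref{hom} and the identification of composition with multiplication in $\operatorname{Cobar}(A)$ established in the preceding proposition. Concretely, given $b = f.a = faf^{-1} + fd(f^{-1})$ with $f$ a degree-$0$ invertible element of $\operatorname{Cobar}(A)$, I would differentiate the identity $ff^{-1} = 1$ via Leibniz to get $fd(f^{-1}) = -d(f) f^{-1}$, substitute this into the gauge formula, and right-multiply by $f$ to arrive at $d(f) + bf - fa = 0$. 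By Proposition~\ref{hom}, this is precisely the closedness condition for $f$ regarded as a degree-$0$ element of ${}_b\operatorname{Cobar}(A)_a$, so $f$ defines a closed degree-$0$ morphism $f\co b \to a$ in $\mathfrak{A}$.

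Next, by symmetry --- since the gauge action is a group action we also have $a = f^{-1}.b$ --- the same manipulation, now applied to $f^{-1}f = 1$, yields $d(f^{-1}) + af^{-1} - f^{-1}b = 0$, so $f^{-1}$ defines a closed degree-$0$ morphism $a \to b$ in $\mathfrak{A}$. Since composition in $\mathfrak{A}$ agrees with multiplication in $\operatorname{Cobar}(A)$, and since the multiplicative unit $1 \in \operatorname{Cobar}(A)$ is visibly closed in every twisted complex ${}_a\operatorname{Cobar}(A)_a$ and is a two-sided identity for the product, the unit $1$ represents $\id_a$ for every object $a$. The on-the-nose equalities $f \cdot f^{-1} = 1 = f^{-1}\cdot f$ in $\operatorname{Cobar}(A)$ therefore compose to $\id_a$ and $\id_b$, exhibiting $a$ and $b$ as strictly isomorphic objects in $\mathfrak{A}$.

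There is no substantive obstacle here: once Proposition~\ref{hom} and the multiplicative description of composition are in place, the entire argument reduces to applying the Leibniz rule to $ff^{-1}=1$. The only minor point worth flagging, which I would mention briefly, is that a degree-$0$ element $f$ of the complete algebra $\operatorname{Cobar}(A) = \widehat{T}(A[-1])$ is invertible precisely when its scalar component $f_0 \in \mathsf{k}$ is non-zero, because the higher tensor components lie in the augmentation ideal and the geometric series expansion of $f^{-1}$ converges tensor-degree by tensor-degree; this guarantees that the gauge action we started with is well-defined and that $f^{-1}$ makes sense as a morphism in $\mathfrak{A}$.
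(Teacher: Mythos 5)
Your proposal is correct and follows essentially the same route as the paper's (much terser) proof: the gauge element $f$ itself, viewed via Proposition~\ref{hom} as a degree-$0$ element of the twisted Hom-complex, is closed precisely because of the gauge-transformation identity, and $f$, $f^{-1}$ compose to the units since composition is multiplication in $\operatorname{Cobar}(A)$. Your Leibniz computation $d(f)+bf-fa=0$ matches the twisted differential \eqref{eq:twist}, and the added remark on invertibility of degree-$0$ elements of the complete tensor algebra is a correct (if optional) refinement.
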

\begin{proof}
The very same invertible element provides the closed isomorphism when viewed as an element of the $\Hom$-complex. Upon explicitly checking closedness, the rest follows from composition being reinterpreted as the multiplication in $\operatorname{Cobar}(A)$.
\end{proof}

\section{CoMorita equivalences}
\label{comorita}
For any DG algebra $A$ and Maurer-Cartan elements $a$, $b$ it holds that
$$ {}_a A _b \otimes_{ {}_b A _b} {}_bA_a = {}_a A _a,$$
so on the nose $ _a A _b$ and $  _bA_a$ are inverse bimodules. This gives an expectation for a Morita equivalence between $\mathfrak{A}$ and $\operatorname{Cobar}(A)$. However, sometimes these bimodules may be acyclic, and derived tensoring by an acyclic bimodule cannot induce an equivalence of derived categories. To make things work one needs to consider not derived categories but instead Positselski's coderived categories, where the class of acyclic objects is replaced by a smaller class of coacyclic objects. For detailed exposition see \cite{P}.

\begin{defi}
\label{coacycl}
For a DG algebra $A$, the subcategory $\mathsf{CoAcycl} \subset \mathsf{Ho}(A)$ is the smallest triangulated subcategory containing totalizations of exact triples of modules and closed with respect to infinite direct sums.
\end{defi}

\begin{defi}
\label{dco}
The coderived category $\mathsf{D}^{co}(A)$ is defined as the Verdier quotient of the homotopy category  $\mathsf{Ho}(A)$ by the full subcategory $\mathsf{CoAcycl}$.
\end{defi}

For the proof of the next lemma, recall the notion of CDG-algebras and their morphisms. 

\begin{defi}
A curved DG-algebra (for brevity, a CDG-algebra) is a graded algebra $A$  equipped with a degree 1 derivation $d$ and a closed curvature element $h \in A^2$, satisfying 
$$d^2(x) = [h,x]$$
A morphism of CDG-algebras $A \to B$ is a pair $(f,b)$ where $f: B \to C$ is a multiplicative map and $c \in B^1$ is the change of curvature, i.e. they satisfy
\begin{equation}
\label{curve}
    f(d_A(x)) = d_B(f(x))+[a,x]
\end{equation}
\begin{equation}
\label{curve2}
d(h_A) = h_B + d_B(b)+b^2
\end{equation}

The composition of CDG-morphisms is 
$$(g,c) \circ (f,b) = (g \circ f, c + g(b))$$
\end{defi}

A DG-algebra can be viewed as a CDG-algebra with zero curvature, but the inclusion $\dgalg \hookrightarrow \cdgalg$ is not full.

\begin{lem}
\label{co}
For any DG algebra $A$ there is an equivalence of coderived categories 
$$\mathsf{D}^{co}(_aA_a) \simeq \mathsf{D}^{co}(_bA_b)$$
\end{lem}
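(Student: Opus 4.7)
The approach I would take is to exhibit $a - b$ as the change-of-curvature element for a CDG-isomorphism between $_aA_a$ and $_bA_b$, and then invoke Positselski's principle that CDG-isomorphisms induce equivalences of coderived categories of CDG-modules. Since $a$ and $b$ are Maurer--Cartan, $(d_a)^2 = (d_b)^2 = 0$, so $_aA_a$ and $_bA_b$ are honest DG-algebras; they qualify as CDG-algebras of vanishing curvature, and in this case CDG-modules coincide with DG-modules. This framework is natural because, as the authors observe, the naive bimodule argument fails: although $_aA_b \otimes_{_bA_b} {}_bA_a = {}_aA_a$ on the nose, the bimodules $_aA_b$ can be acyclic, which is harmless in the coderived but not in the derived world.

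Concretely, I would verify that $(\id_A, a - b): {}_aA_a \to {}_bA_b$ satisfies the two CDG-morphism axioms. Equation \eqref{curve} is trivial: $\id(d_a(x)) = d(x) + [a, x]$ equals $d_b(x) + [a - b, x]$ since $[a - b, x] = [a, x] - [b, x]$. Equation \eqref{curve2} demands $d_b(a - b) + (a - b)^2 = 0$. Expanding with Koszul signs for the degree-$1$ elements, one has $d_b(a - b) = d(a) - d(b) + [b, a - b]$, and a short computation gives $[b, a - b] + (a - b)^2 = a^2 - b^2$; the Maurer--Cartan equations $d(a) = -a^2$ and $d(b) = -b^2$ then provide the cancellation. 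The pair $(\id_A, b - a)$ is an inverse under the composition law $(g, c) \circ (f, \beta) = (g \circ f, c + g(\beta))$, since $(a - b) + (b - a) = 0$. Hence $(\id_A, a - b)$ is a CDG-isomorphism.

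Granting \cite{P}'s theorem that a CDG-isomorphism induces an equivalence of coderived categories of CDG-modules, the lemma follows by specializing to the zero-curvature case where CDG-modules and DG-modules coincide. The main obstacle — the only nontrivial computation — is the verification of \eqref{curve2}, which is routine but requires careful tracking of Koszul signs in $[b, a - b]$ and in $(a-b)^2$ for degree-$1$ elements. The conceptual point worth emphasizing is that the passage $_aA_a \rightsquigarrow {}_bA_b$ is invisible at the level of DG-algebra maps and only becomes a morphism after one enlarges to the CDG category; it is precisely this broadening that the coderived formalism is designed to accommodate.
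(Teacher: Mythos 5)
Your proposal is correct and follows essentially the same route as the paper: both exhibit $_aA_a$ and $_bA_b$ as isomorphic CDG-algebras with zero curvature via an identity-on-algebras CDG-morphism whose change-of-curvature element is built from the Maurer--Cartan elements (the paper writes it as $(\id,-a)$, implicitly passing through the untwisted algebra, while you write $(\id,a-b)$ directly), with \eqref{curve} matching the twisting formula and \eqref{curve2} reducing to the Maurer--Cartan equations, and then invoke Positselski's invariance of coderived categories under CDG-isomorphisms. Your sign verification of \eqref{curve2} is correct and in fact more explicit than the paper's.
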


\begin{proof}
$_aA_a$ and $_bA_b$ are isomorphic as CDG-algebras (with zero curvature). The CDG-isomorphism $_aA_a \to _bA_b$ is given by $(id,-a)$, where \eqref{curve} corresponds to the formula for twisting the differential, and \eqref{curve2} corresponds to Maurer-Cartan equation for $a$. Coderived categories are preserved under CDG-isomorphisms.
\end{proof}
\begin{rem}
Compare the calculation above of the explicit representative for the homotopy limit of the DG-algebras considered as DG-categories with the following. 
\begin{enumerate}
\item
In the paper \cite{AO2} the authors solve a similar problem for the homotopy limit {\em of the  derived categories} of DG-modules over the DG-algebras in the cosimplicial system. The answer can be interpreted as the derived category of DG-modules over the reduced Cobar construction for the original DG-Hopf algebra (Theorem 4.1.1).
\item
Conjecturally the statement remains true also for the homotopy limit of the corresponding enhanced  {\em coderived} categories: one obtains the coderived category of DG-modules over the Cobar construction for the original DG-Hopf algebra.
\end{enumerate}
Now take the category of DG-modules over the DG-category of Maurer-Cartan elements $\mathfrak{A}$. While its derived category obviously differs from the derived category that appears in (1), its {\em coderived} category is quasi-equivalent to the answer in (2).
\end{rem}

We will now make this precise. Let $B$ be an arbitrary DG-algebra. 

\begin{defi}
Maurer-Cartan DG-category $\mathsf{MC}(B)$ has Maurer-Cartan elements of $B$ as morphisms, and Hom-complexes are given by $$\Hom_{\mathsf{MC}(B)}(a,b) = {}_a B_b.$$
\end{defi}

The definitions \ref{coacycl} and \ref{dco} can be directly generalized from DG-algebras to DG-categories, so for a DG-category $\mathcal{C}$ one can consider a category $D^{co}(\mathcal{C})$. 

\begin{prop}
\label{comor}
For any DG-algebra $B$ and a Maurer-Cartan element $b \in B$ there is an equivalence of categories 
$$\mathsf{D}^{co}(\mathsf{MC}(B)) \simeq \mathsf{D}^{co}(_b B _b).$$
\end{prop}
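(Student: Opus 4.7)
The approach is to exhibit the equivalence as restriction along the fully faithful DG-inclusion $\iota\colon \langle b\rangle \hookrightarrow \mathsf{MC}(B)$ of the full sub-DG-category on the single object $b$, for which $\operatorname{End}_{\mathsf{MC}(B)}(b) = {}_bB_b$. Restriction of modules along $\iota$ gives
$$\iota^*\colon \mathsf{DGMod}(\mathsf{MC}(B)) \to \mathsf{DGMod}({}_bB_b), \qquad M \mapsto M(b).$$
Since this evaluation functor is exact and commutes with arbitrary direct sums, it sends the class $\mathsf{CoAcycl}$ of Definition \ref{coacycl} to itself, and hence descends to a triangulated functor on coderived categories. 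This will be our candidate equivalence.

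The quasi-inverse is the derived left Kan extension, given on cofibrant modules by the tensor product formula
$$\iota_! N(a) \;=\; \mathsf{MC}(B)(b, a) \otimes_{{}_bB_b} N \;=\; {}_bB_a \otimes_{{}_bB_b} N.$$
The unit $N \to \iota^*\iota_! N$ is the tautological identification $N = {}_bB_b \otimes_{{}_bB_b} N$. The heart of the proof is to show that the counit $\epsilon_M\colon \iota_!\iota^* M \to M$ is a coderived isomorphism for every $\mathsf{MC}(B)$-module $M$. By naturality, and since both $\iota_!$ and $\iota^*$ preserve arbitrary coproducts and triangles on the coderived level, it suffices to check this on the representable modules $h_a = \mathsf{MC}(B)(a, -)$, which are cofibrant and generate $\mathsf{D}^{co}(\mathsf{MC}(B))$ as a localizing subcategory. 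At an object $x$, the counit on $h_a$ is given (with appropriate Koszul sign) by the multiplication
$${}_bB_x \otimes_{{}_bB_b} {}_aB_b \;\longrightarrow\; {}_aB_x,$$
whose source and target are both canonically $B$ as graded vector spaces via $B \otimes_B B = B$. That the twisted differentials match under this graded identification is a direct calculation reducing to the proof of Lemma \ref{co}: the CDG-isomorphism between ${}_aB_a$ and ${}_bB_b$ extends to identify the bimodule ${}_aB_b$ with ${}_bB_b$, and the compatibility boils down to the Maurer-Cartan equations for $a$ and $b$. Thus $\epsilon_{h_a}$ is a chain isomorphism, hence $\epsilon_M$ is a coderived isomorphism for every $M$.

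The main obstacle lies in the Koszul-sign bookkeeping needed to make the counit formula an honest chain map between the twisted complexes on either side; once this is set up the verification is essentially immediate from Lemma \ref{co}. A secondary and standard point is the generation step for extending the counit from representables to all $\mathsf{MC}(B)$-modules, for which we invoke that representable modules form a generating set of $\mathsf{D}^{co}(\mathsf{MC}(B))$ closed under coproducts and triangles, so that a natural transformation of exact coproduct-preserving functors that is an isomorphism on every representable is automatically an isomorphism on the whole coderived category.
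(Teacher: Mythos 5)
Your overall strategy (restriction to the object $b$, with extension of scalars along ${}_aB_b$ as the candidate inverse) is the same as the paper's, but the reduction step at the heart of your argument is wrong. You claim that the representable modules $h_a$ generate $\mathsf{D}^{co}(\mathsf{MC}(B))$ as a localizing subcategory, so that checking the counit on representables suffices. For \emph{derived} categories this is standard; for \emph{coderived} categories it is false in general, and its failure is essentially the raison d'\^etre of the coderived category. For example, for $B=\Lambda(\epsilon)=k[\epsilon]/\epsilon^2$ with $|\epsilon|=1$ and $d=0$, the unbounded complex $\cdots\xrightarrow{\epsilon}\Lambda\xrightarrow{\epsilon}\Lambda\xrightarrow{\epsilon}\cdots$ is acyclic but not coacyclic, hence a nonzero object of $\mathsf{D}^{co}(\Lambda)$; since $\Hom_{\mathsf{D}^{co}}(\Lambda[n],X)\cong H^{-n}(X)=0$ for it, it lies in the right orthogonal of all shifts of the free module, which therefore does not generate. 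A natural transformation of exact coproduct-preserving functors that is an isomorphism on representables need not be an isomorphism on all of $\mathsf{D}^{co}$. A secondary gap: for $\iota_!$ to descend to coderived categories at all you must check it sends coacyclic ${}_bB_b$-modules to coacyclic $\mathsf{MC}(B)$-modules, which requires it to preserve short exact sequences; this holds because ${}_aB_b$ is graded-free of rank one, hence flat, over ${}_bB_b$, but you do not address it.

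The good news is that the reduction to representables is unnecessary, and this is exactly how the paper proceeds: the counit ${}_aB_b\otimes_{{}_bB_b}M(b)\to M(a)$, $f\otimes m\mapsto M(f)(m)$, is an \emph{honest isomorphism of DG-modules} for every $M$ (its inverse is $m\mapsto 1\otimes M(1)(m)$, where $1\in{}_aB_b$ is a closed degree-zero morphism $a\to b$ because of the twisted differential \eqref{eq:twist}, and $1\cdot 1=1$ gives the identity of $a$), and the unit is the tautological identification. So $F=\iota^*$ and $G=\iota_!$ are already inverse equivalences of the DG-categories of modules, and one only has to verify that both preserve the class $\mathsf{CoAcycl}$ (objectwise exactness, sums, cones and totalizations for $F$; the same plus the flatness of ${}_aB_b$ for $G$). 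If you replace your generation argument by this direct verification, your proof coincides with the paper's.
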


\begin{proof}
This is a statement of the type ``modules over a  connected groupoid are the same as modules over endomorphisms of an object in this groupoid", with a similar proof. \\

Let 
$$F\co \mathsf{DGMod}(\mathsf{MC}(B)) \to \mathsf{DGMod}(_b B_b )$$
be given by restricting to $b$,
$$F(M)=M(b).$$ Define
$$G \co \mathsf{DGMod}(_b B _b) \to \mathsf{DGMod}(\mathsf{MC}(B)) $$
by setting, for $a \in \mathsf{MC}(b)$, 
$$G(N)(a) = {}_aB_b\otimes_{_b B _b} N$$
and for $f \in \mathsf{MC}(B)(a_1,a_2) = {}_{a_1} B _{a_2}$ let the corresponding map 
$$G(f)\co _aB_b\otimes_{_bB _b} N \to {} _aB_b\otimes_{_b B _b} N$$
be simply multiplication by $f$ on the left. We would like to check that these functors induce an equivalence on coderived categories. First we check that they give an equivalence at the level of DG-categories. It is clear that $FG = Id_{\mathsf{DGMod}(_b B_b))}$. For $M \in \mathsf{DGMod}(\mathsf{MC}(B))$ and $a \in \mathsf{MC}(B)$, we have
$$GF(M)(a) = _a B _b \otimes_{_b B_b} M(b).$$
Then the isomorphism $GF(M) \to M$ is given at $a$ by 
$$ f \otimes m \mapsto M(f)(m)$$
and its inverse is 
$$ m \mapsto 1 \otimes M(1)(m)$$
where $1 \in {}_aB_b$ is viewed as a map $a \to b$.\\

We are left to verify that $F$ and $G$ preserve coacyclic objects. To do so, they need to preserve exact triples, and commute with totalizations, cones and infinite direct sums. For DG-modules over a DG-category, exactness is checked objectwise, and totalizations, cones and direct sums are also formed objectwise. Thus for $F$ the statements hold trivially. For $G$, the statements about totalizations, cones and sums hold trivially, and the statement that $G$ respects exact triples follows from flatness of $_b B_b$-modules $_a B _b$. They are indeed flat, because their underlying graded modules are just free of rank 1, and flatness does not depend on the differential.
\end{proof}

Note that in particular this proposition establishes a coMorita equivalence between $\mathsf{MC}(B)$ and $B$ itself, as $B$ can be seen as endomorphism algebra of $0 \in \mathsf{MC}(B)$. Also note that Lemma \ref{co} follows from this proposition, but we keep its proof via CDG-isomorphism because it is conceptually correct.

\begin{cor} There is an equivalence of coderived categories
$$\mathsf{D}^{co}(\mathfrak{A}) \simeq \mathsf{D}^{co}(\operatorname{Cobar}_{\operatorname{coaug}}(A)).$$ 
\qed
\end{cor}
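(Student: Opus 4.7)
The corollary should follow from applying Proposition \ref{comor} (or, more precisely, rerunning its proof) to the DG-category $\mathfrak{A}$ itself, taking as the distinguished object $\mathbb{I}$ corresponding to the Maurer--Cartan element $1_A \in A \subset \operatorname{Cobar}(A)$. By Proposition \ref{hom}, $\mathfrak{A}$ is a full subcategory of $\mathsf{MC}(\operatorname{Cobar}(A))$, and $\mathbb{I}$ lies in it because its first component is the honest unit $1_A$, which is invertible.

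Concretely, the plan is to introduce
$$F(M) = M(\mathbb{I}), \qquad G(N)(a) = {}_a\operatorname{Cobar}(A)_{1_A} \otimes_{\operatorname{End}_\mathfrak{A}(\mathbb{I})} N,$$
copying the formulas from Proposition \ref{comor} verbatim, and to check that $F$ and $G$ form a quasi-inverse pair of DG-equivalences $\mathsf{DGMod}(\mathfrak{A}) \leftrightarrows \mathsf{DGMod}(\operatorname{End}_\mathfrak{A}(\mathbb{I}))$ that preserve coacyclic objects. The unit and counit computations are word-for-word the same as in Proposition \ref{comor}; preservation of coacyclicity under $F$ is objectwise-trivial (exactness, cones, totalizations and infinite sums of DG-modules over a DG-category are computed pointwise), while for $G$ it reduces to flatness of the bimodules ${}_a\operatorname{Cobar}(A)_{1_A}$ as right $\operatorname{End}_\mathfrak{A}(\mathbb{I})$-modules, and this in turn is immediate because they are free of rank one on the underlying graded level. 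This yields $\mathsf{D}^{co}(\mathfrak{A}) \simeq \mathsf{D}^{co}(\operatorname{End}_\mathfrak{A}(\mathbb{I}))$.

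Finally, the identification $\operatorname{End}_\mathfrak{A}(\mathbb{I}) \simeq \operatorname{Cobar}_{\operatorname{coaug}}(A)$ was already recorded in the paragraph following Proposition \ref{hom}: the endomorphism algebra is ${}_{1_A}\operatorname{Cobar}(A)_{1_A}$, and twisting the $\operatorname{Cobar}$ differential by $1_A$ produces precisely the extra boundary terms $1 \otimes \id - \id \otimes 1$ that distinguish $\operatorname{Cobar}_{\operatorname{coaug}}(A)$ from $\operatorname{Cobar}(A)$. The one genuinely conceptual point to verify is that the proof of Proposition \ref{comor} uses nothing about $\mathsf{MC}(B)$ beyond the existence of a base object together with flat Hom-bimodules to every other object; once this is checked by inspection, passage to the full subcategory $\mathfrak{A}$ requires no modification, and the equivalence is established.
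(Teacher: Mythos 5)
Your proposal is correct and follows the same route the paper intends: the corollary is left as an immediate consequence of Proposition \ref{comor} applied to $B=\operatorname{Cobar}(A)$ with base object $\mathbb{I}$ (i.e.\ $b=1_A$), combined with the identification $\operatorname{End}_{\mathfrak{A}}(\mathbb{I})={}_{1_A}\operatorname{Cobar}(A)_{1_A}\simeq\operatorname{Cobar}_{\operatorname{coaug}}(A)$ recorded after Proposition \ref{hom}. Your explicit remark that the argument of Proposition \ref{comor} only uses the base object together with graded-free Hom-bimodules, and hence restricts without change to the full subcategory $\mathfrak{A}\subset\mathsf{MC}(\operatorname{Cobar}(A))$ of elements with homotopy invertible first component, is exactly the small point the paper leaves implicit.
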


Here we are considering reduced Cobar construction for the sake of comparing with the result in \cite{AO2} and with the computation in $\dgalg$. Reduced and non-reduced Cobar constructions are coMorita equivalent by Proposition \ref{comor}  (though not Morita equivalent).

\section{Homotopy characters}
\label{homchar}
Recall the notion of an $A_\infty$-comodule  over a DG-coalgebra ($A_\infty$-comodules can be considered over any $A_\infty$-coalgebra, but this generality will not be needed). For detailed exposition see \cite{AO2} or, on the dual side, \cite{Kel}. 

\begin{defi}
The $A_\infty$-comodule structure on a graded vector space $M$ over a DG-coalgebra $C$ is a DG-module structure on $M \otimes \operatorname{Cobar} (C) $ over $\operatorname{Cobar} (C)$. Explicitly, it is given by a sequence of coaction maps, for all $n \geq 1$,
$$ \mu_n \co M \to C^{\otimes n-1}\otimes M$$
with $\mu_n$ of degree $1-n$ and all the collection of maps together satisfying the $A_\infty$-identities for each $n \geq 1$:
\begin{equation}
\label{eq:ainfin}
\begin{split}
& (-1)^{n-1} \sum_{i=0}^n  (\id^{\otimes i} \otimes d \otimes \id ^{\otimes n-i-1}) \mu_{n} + \mu_{n} d  \\
& + \sum_{i=1}^{n-1} (-1)^{i}(\id^{\otimes i} \otimes \mu_{n-i})\mu_{i} + \sum_{i=0}^{n-2} (-1)^{i} (\id^{\otimes i } \otimes \Delta \otimes \id^{\otimes n-i-2} ) \mu_{n-1} = 0
\end{split}
\end{equation} 
\end{defi}

\begin{defi}
For two $A_\infty$-comodules over a DG-algebra $A$, $\Hom$-complex between them is defined by
$$\Hom^m(M,N) = \prod_{i=0}^\infty \Hom_k^{m-i}(M, C^{\otimes i} \otimes N)$$
with differential 
\begin{equation}
\begin{split}
  \label{eq:ainfdif}
& d(f)_n = \sum_{k=1}^{n-2} (-1)^{n-k}(\id^{\otimes n-k-2}\otimes\Delta\otimes\id^{\otimes k})f_{n-1} \\
& + \sum_{i=0} (-1)^{i} (\id^{\otimes i}\otimes \mu_{n-i})f_{i+1} + \sum_{p=1}^n (-1)^{p|f|}(\id^{\otimes p-1}\otimes f_{n-p+1})\mu_p
\end{split}
\end{equation}

The composition is given by
\begin{equation}
\label{eq:ainfcomp}
(g \circ f)_n = \sum_{l=1}^n (-1)^{|g|(l-1)} (\id ^{\otimes l-1} \otimes g_{n-l+1})f_l
\end{equation}

\end{defi}

\begin{prop}
The DG-category $\mathfrak{A}$ is isomorphic to the subcategory of 1-dimensional (non-counital) $A_\infty$-comodules over $A$. 
\end{prop}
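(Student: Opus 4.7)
The plan is to exhibit the isomorphism by a direct term-by-term dictionary between the two sets of structure maps. Specializing the $A_\infty$-comodule definition to $M = \mathsf{k}$ concentrated in degree $0$ with trivial internal differential, each coaction $\mu_n$ collapses to an element of $A^{\otimes n-1}$, and under the reindexing $a_i := \mu_{i+1}$ the tensor powers and degrees line up with those of an object of $\mathfrak{A}$. On objects, I would then specialize the $A_\infty$-identity \eqref{eq:ainfin} under this substitution: the $\mu_n \circ d$ term vanishes, the Leibniz pieces $(\id^{\otimes i} \otimes d \otimes \id^{\otimes n-i-1})\mu_n$ assemble into $d(a_{n-1})$, the bilinear compositions $(\id^{\otimes i} \otimes \mu_{n-i})\mu_i$ become the tensor products of $a$'s, and the $\Delta$-terms match verbatim. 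This recovers exactly the Maurer--Cartan equation \eqref{eq:form} of $\operatorname{Cobar}(A)$, which is consistent with the earlier identification of objects of $\mathfrak{A}$ with Maurer--Cartan elements.

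For morphisms I would do the same term-by-term check. A degree-$m$ morphism of $A_\infty$-comodules between two 1-dimensional comodules is a sequence in $\prod_i (A^{\otimes i})^{m-i}$, which is the same underlying graded object as a morphism in $\mathfrak{A}$ by Proposition \ref{hom}. The differential \eqref{eq:ainfdif} should collapse to \eqref{eq:diff}: the intrinsic $d(f_n)$ part is retained, the $(\id^{\otimes i} \otimes \mu_{n-i}) f_{i+1}$ and $(\id^{\otimes p-1} \otimes f_{n-p+1})\mu_p$ pieces yield the bimodule-twisting tensor products $a_k \otimes f_{n-k}$ and $f_k \otimes b_{n-k}$, and the $\Delta$-terms already have the required shape. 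Composition \eqref{eq:ainfcomp} likewise reduces to the multiplication formula \eqref{eq:mult}, namely to the multiplication in $\operatorname{Cobar}(A)$ already exhibited in Proposition \ref{hom}.

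Finally, the word ``subcategory'' is essential: the homotopy-invertibility condition on $a_1$ coming from Theorem \ref{ao} is not part of the definition of an $A_\infty$-comodule, so $\mathfrak{A}$ sits inside the DG-category of 1-dimensional $A_\infty$-comodules as the full subcategory on those whose first coaction $\mu_2 \leftrightarrow a_1$ is homotopy invertible. The main obstacle I anticipate is clerical rather than conceptual: aligning the sign and index conventions of the two formalisms so that each of the four displayed formulas becomes literally the same after substitution. Once the dictionary $a_i = \mu_{i+1}$ is fixed and the signs reconciled, all four verifications (object equations, Hom-complex as a graded object, differential, composition) are mechanical.
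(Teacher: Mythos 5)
Your proposal is correct and follows essentially the same route as the paper: specialize the $A_\infty$-comodule definition to $M=\mathsf{k}$, identify each coaction with an element of a tensor power of $A$, and match the four displayed formulas (object equations, Hom-complexes, differential, composition) term by term, with the homotopy-invertibility of $a_1$ carving out the subcategory. The paper's own proof is just this dictionary stated more tersely (and with a slight indexing slip relative to its definition of $\mu_n$, which your reindexing $a_i=\mu_{i+1}$ handles more carefully).
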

\begin{proof}
For $M = \mathsf{k}$ a structure map $\mu_n \co \mathsf{k} \to A^{\otimes n} \otimes \mathsf{k}$ is indeed given by an element $a_n \in A^{\otimes n}$. The $A_\infty$-relations \eqref{eq:ainfin} correspond to the formulas \eqref{eq:form}. The formula for the differential \eqref{eq:ainfdif} corresponds to \eqref{eq:diff}, and the formula for the composition \eqref{eq:ainfcomp} corresponds to \eqref{eq:mult}.
\end{proof}

Note that if $A$ was the coalgebra of functions on some group, then comodules over this coalgebra would correspond to representations of the group. This leads us to the following interpretation of our data. $A_\infty$-comodules over a Hopf DG-algebra can be viewed as {\it representations up to homotopy} of the corresponding derived group. Within this category, one-dimensional comodules correspond to {\it homotopy characters}. Group representations up to homotopy have been defined and studied (for non-derived Lie groupoids) by Abad-Crainic in \cite{AC}. \\

In the case when $A$ is a Hopf algebra of functions on a group (concentrated in degree 0), our category  has honest characters as objects, and the Hom complexes  compute Exts between them.

\begin{eg}
Let $G$ be the group of invertible upper triangular $2 \times 2$ matrices over $\mathbb{C}$. Consider the following functions:

$$x \begin{pmatrix}
a & c \\
0 & b
\end{pmatrix} = a;\mbox{  }\mbox{  }
y \begin{pmatrix}
a & c \\
0 & b
\end{pmatrix} = b; \mbox{  }\mbox{  }
z \begin{pmatrix}
a & c \\
0 & b
\end{pmatrix} = c. 
$$

The Hopf algebra of regular functions on $G$ is $\mathbb{C}[x^{\pm1},y^{\pm1},z]$, with comultiplication
$$\Delta(x^{\pm1}) = x^{\pm1}\otimes x^{\pm1};$$
$$\Delta(y^{\pm1}) = y^{\pm1}\otimes y^{\pm1};$$
$$\Delta(z) = x \otimes z + z \otimes y.$$

$1$ and $xy^{-1}$ are two characters of $G$. We have $Ext^1(1,xy^{-1}) = \mathbb{C}.$ In our Holim category, the Hom complex between $1$ and $xy^{-1}$ is 

$$ \mathbb{C} \longrightarrow \mathbb{C}[x^{\pm1},y^{\pm1},z]\longrightarrow \mathbb{C}[x^{\pm1},y^{\pm1},z]^{\otimes 2} \longrightarrow \ldots$$

where the first differential is multiplication by $1-xy^{-1}$, and the second differential is given by $d(f) = f \otimes 1 + xy^{-1} \otimes f + \Delta(f)$. The kernel of it is generated by $1-xy^{-1}$ and $y^{-1}z$, the latter being a representative for the nontrivial first Ext.
\end{eg}

\section{Tensor products and multibraces}
\label{monoidal}
One can see that the data of multiplication in $A$ does not come up in the answer so far. This however suggests that $\mathfrak{A}$ is equipped with additional structure. We notice that a {\it commutative} DG-algebra is a {\it monoidal} DG-category with one object, and while the passage to homotopy limit might not preserve this structure, at least something can be expected to survive. Indeed, in \cite{ACD} the authors construct the monoidal structure on the homotopy category of all representations up to homotopy, which in particular restricts to the subcategory of characters. We obtain a similar answer for noncommutative DG-Hopf algebras as well.\\

Let $a = \{a_i \}$ and $b = \{b_i \}$ be two homotopy characters.  Then $a_1$ and $b_1$ are homotopy invertible and homotopy grouplike, and so is $a_1b_1$. Indeed, if  $a_1 \otimes a_1 - \Delta(a_1) = d(a_2)$ and $b_1 \otimes b_1 - \Delta(b_1) = d(b_2)$, then

\begin{align*}
& a_1b_1 \otimes a_1b_1 - \Delta(a_1b_1)   \\
& = (a_1 \otimes a_1)(b_1 \otimes b_1) - \Delta(a_1)\Delta(b_1) \\
& =  (\Delta(a_1)+d(a_2))(\Delta(b_1)+d(b_2))- \Delta(a_1)\Delta(b_1) \\
& = (\Delta(a_1)+d(a_2))d(b_2)+ d(a_2) \Delta(b_1)  \\
& = (a_1 \otimes a_1) d(b_2)+ d(a_2) \Delta(b_1)  \\
& = d((a_1 \otimes a_1)b_2+ a_2 \Delta(b_1)).
\end{align*}

We notice that  $(a_1b_1, (a_1 \otimes a_1) b_2 + a_2\Delta(b_1), \ldots)$ starts looking like the beginning of another homotopy character. There is an asymmetry between $a$ and $b$, but there is a certain freedom to modify the formulas above, so we could have also obtained $(a_1b_1, a_2 (b_1 \otimes b_1) + \Delta(a_1) b_2, \ldots)$.

\begin{theo}
Let $a = (a_1, a_2, \ldots)$ and $b = (b_1,b_2, \ldots)$ be homotopy characters. Then there exists a homotopy character $a \otimes b$, given by the formulas 

\begin{equation}
\label{eqref:tensor}
(a \otimes b)_n = \sum_{i_1 + \ldots + i_k = n} (a_{i_1} \otimes \ldots \otimes a_{i_k})(\Delta^{i_1-1}\otimes \ldots \otimes \Delta^{i_k-1})(b_n).
\end{equation} 

There also exists a homotopy character given by the formulas

\begin{equation}
\label{eqref:tensor2}
(a \otimes b)_n = \sum_{i_1 + \ldots + i_k = n} (\Delta^{i_1-1}\otimes \ldots \otimes \Delta^{i_k-1})(a_n) (b_{i_1} \otimes \ldots \otimes b_{i_k})
\end{equation} 

Both tensor products of objects are strictly associative.
\end{theo}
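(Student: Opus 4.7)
The plan is to verify that \eqref{eqref:tensor} produces a Maurer-Cartan element (the case of \eqref{eqref:tensor2} is entirely parallel by left-right symmetry) and then establish strict associativity by a direct combinatorial expansion. The conceptual viewpoint I would adopt is that the bialgebra structure on $A$ endows $\operatorname{Cobar}(A)$ with an extra convolution-type operation on sequences: the multiplication in $A$ binds the factors $a_{i_j}$ to the iterated comultiplications $\Delta^{i_j-1}$ applied to the $b$-side, and on Maurer-Cartan elements this operation should preserve the MC property. The preamble calculation showing that $a_1 b_1$ is homotopy grouplike with explicit witness $(a_1\otimes a_1)b_2 + a_2\Delta(b_1)$ is precisely the $n = 1, 2$ instance, and it already reveals the mechanism by which the MC equations for $a$ and for $b$ combine.

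To verify \eqref{eq:form} for $(a\otimes b)$, I would expand $d((a\otimes b)_n)$ componentwise via Leibniz, replace each $d(a_{i_j})$ and $d(b_j)$ by the right-hand side of \eqref{eq:form} applied to $a$ and $b$, and match the resulting sum against $-\sum_k (-1)^{n-k}(a\otimes b)_{n-k}\otimes(a\otimes b)_k + \sum_k(-1)^{n-k}(\mathrm{id}^{\otimes k-1}\otimes \Delta\otimes\mathrm{id}^{\otimes n-k-1})(a\otimes b)_{n-1}$. The essential algebraic inputs are multiplicativity of $\Delta$, i.e. $\Delta(xy)=\Delta(x)\Delta(y)$, and coassociativity: the former lets the tensor-squared terms coming from $a$'s MC equation be absorbed into $(a\otimes b)$-level tensor squares paired with $\Delta$-expansions of $b$, and the latter lets the $\Delta$-boundary contributions from $b$ be absorbed into the $\Delta$-boundary of $(a\otimes b)_{n-1}$.

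For strict associativity I would expand $((a\otimes b)\otimes c)_n$ and $(a\otimes(b\otimes c))_n$ directly. Each becomes a sum indexed by a two-level composition of $n$: an outer composition $n = j_1 + \cdots + j_\ell$ together with a further splitting of each block arising from the nested tensor product. Reindexing by the total refinement and invoking associativity of the multiplication in $A$ and coassociativity of $\Delta$ identifies both sums with the same summation over refined compositions of $n$; no sign subtleties appear here, since the regrouping involves only coassociative maps and rebracketed products.

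The principal obstacle I anticipate is the combinatorial and sign bookkeeping in the MC verification: aligning the $(-1)^{n-k}$ signs in \eqref{eq:form} with the Koszul signs arising when Leibniz is applied to the tensor string $a_{i_1}\otimes\cdots\otimes a_{i_k}$ of odd-degree elements, and then reindexing the resulting double sum so that it collapses exactly into the desired form. Recognising the emerging pattern as a Kadeishvili multibrace identity on $\operatorname{Cobar}(A)$, in the spirit promised by the title of Section \ref{monoidal}, would streamline this bookkeeping considerably, but a direct expansion should go through.
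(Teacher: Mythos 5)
Your proposal takes essentially the same route as the paper: the paper's own proof consists of exactly the direct verification you outline (checking the Maurer--Cartan equation and strict associativity by explicit computation), and you correctly identify the key inputs — the bialgebra compatibility $\Delta(xy)=\Delta(x)\Delta(y)$, coassociativity, and reindexing over refined compositions for associativity. The only caveat is that neither you nor the paper carries out the sign bookkeeping in full; note also that the factor $(b_n)$ in \eqref{eqref:tensor} should read $(b_k)$ for the degrees to match, as the later multibrace rewriting confirms.
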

\begin{proof}
It can be explicitly checked that Maurer-Cartan equation holds in both cases.
Strict associativity of these tensor products is obtained by a direct computation. 
\end{proof}

The formulas above are the same as in Corollary 5.10 in \cite{ACD} -- in their notation, these are $\omega_0$ and $\omega_1$. Theorem 5.6 in \cite{ACD} states that the two different tensor products are actually homotopy equivalent.\\


The formulas \eqref{eqref:tensor} and \eqref{eqref:tensor2} have an interpretation in terms of Kadeishvili's multibraces that exist on the $\operatorname{Cobar}$-construction of a bialgebra and assemble into homotopy Gerstenhaber algebra structure. Recall the following definitions.

\begin{defi}
For a DG-algebra $B$ with multiplication $\mu$, its $\operatorname{Bar}$-construction is, as a graded vector space, 
$$\operatorname{Bar}(B) = {T}(B[1]) = \bigoplus_{i=0}^\infty B[1]^{\otimes i}.$$
The comultiplication is that of a tensor coalgebra. The differential is given by $d = d_B + \mu$ into the cogenerators and extends to the rest of the coalgebra by coLeinbiz rule.
\end{defi}

\begin{defi}
\label{hga}
A DG-algebra $B$ is a homotopy Gerstenhaber algebra (hGa) if it is equipped with a family of operations (multibraces) $$E_{1_,k}\co B \otimes B^{\otimes k} \to B$$ that induce a associative multiplication on $\operatorname{Bar}(B)$ consistent with its tensor comultiplication.
\end{defi}

\begin{rem}
A multiplication on $\operatorname{Bar}(B)$ is a coalgebra map $$E\co \operatorname{Bar}(B)\otimes  \operatorname{Bar}(B) \to \operatorname{Bar}(B).$$ As a coalgebra map, it is uniquely determined by its part that lands into the cogenerators, $B$. Denote its component $B^{\otimes l} \otimes B^{\otimes k} \to B$ by $E_{l,k}$. A family of $E_{l,k}$ that gives rise to an associative multiplication is known as Hirsch algebra structure on $B$. In Definition \ref{hga} we restrict ourselves to families where $E_{l,k}$ vanish when $l \neq 1$.
\end{rem}

For elements $b$ and $b_1$, $\ldots$, $b_k$ we write $E_{1,k}(b;b_1, \ldots, b_k) = b\{b_1, \ldots b_k \}$ (thus the term multibraces). We can naturally modify the definitions above to also obtain operations $E_{k,1}$, for which we will write $E_{1,k}(b_1, \ldots, b_k;b) = \{b_1, \ldots b_k \}b$. Let us call operations $E_{1,k}$ left multibraces, and operations $E_{k,1}$ right multibraces. \\

In Section 5 of \cite{Ka} the author constructs (left) hGa structure on $B = \operatorname{Cobar}(A)$ for a bialgebra $A$. For tensors $x = x^{(1)} \otimes \ldots \otimes x^{(n)} \in B$ 
 and $y_1$, $y_2$, $\ldots$, $y_k$ $\in B$, the left multibrace $E_{1,k}$ is given by 
\begin{align*}
&   E_{1,k}(x;y_1,\ldots,y_k) = \\
&     \sum_{1 \leq i_1 < \ldots < i_k \leq n} \pm x^{(1)} \otimes \ldots  \otimes (\Delta^{|y_1|-1}(x^{(i_1)}) \cdot y_1) \otimes \ldots \otimes x^{(n)}.
\end{align*}

By $|y|$ we mean the length of tensor, and if $|x|=n<k$ then the multibrace vanishes. \\

One can similarly define (right) hGa structure on the same $B$. For tensors $x_1$, $x_2$, $\ldots$, $x_k$ $\in B$ 
 and $y = y^{{1}} \otimes \ldots \otimes y^{(n)} \in B$, the right multibrace $E_{k,1}$ is given by 
\begin{align*}
& E_{k,1}(x_1,\ldots,x_n;y) = \\
& \sum_{1 \leq i_1< \ldots < i_k \leq n} \pm y^{(1)} \otimes \ldots \otimes (x_1 \cdot \Delta^{|x_1|-1}(y^{(i_1)}))\otimes \ldots \otimes y^{(n)}.
\end{align*}

Now the formula \eqref{eqref:tensor} can be rewritten as 

$$(a \otimes b)_n = \sum_{i_1+ \ldots + i_k = n} \{ a_{i_1},\ldots,a_{i_k} \}b_k$$

and the formula \eqref{eqref:tensor2} can be rewritten as 

$$(a \otimes b)_n = \sum_{i_1+ \ldots + i_k = n} a_k \{ b_{i_1},\ldots,b_{i_k} \}$$

\begin{rem}
The results of \cite{ACD} on tensoring morphisms also work in our generality of non-commutative DG-Hopf algebra. However, extracted from its natural (operadic) framework, the formula looks totally unenlightening: 
\begin{align*}
& (f \otimes g)_n = \\
& = \sum_{\substack{i_1+\ldots+i_k=n \\ 1 \leq m \leq k}}g_0( a_{i_1} \otimes \ldots \otimes a_{i_{m-1}} \otimes f_{i_m} \otimes x_{i_{m+1}} \otimes \ldots \otimes x_{i_k})(\Delta^{i_1-1} \otimes \ldots \otimes \Delta^{i_k-1})b_k \\
& + \sum_{i_1+\ldots+i_k=n} f_0 (x_{i_1} \otimes \ldots \otimes x_{i_k})(\Delta^{i_1-1} \otimes \ldots \otimes \Delta^{i_k-1}) g_k \\
& + \sum_{\substack{i+j=n \\ i_1+ \ldots + i_k = i \\ j_1+ \ldots j_l = j \\ 1 \leq m \leq k}} (a_{i_1} \otimes \ldots \otimes a_{i_{m-1}} \otimes f_{i_m} \otimes x_{i_{m+1}} \otimes \ldots \otimes x_{i_k} \otimes x_{j_1} \otimes \ldots \otimes x_{j_l}) \\
& (\Delta^{i_1-1} \otimes \ldots \otimes \Delta^{i_k-1})b_k \otimes (\Delta^{j_1-1}\otimes \ldots \otimes \Delta^{j_l-1})g_l.
\end{align*}

We do not spell out the signs here, since the formula is already sufficiently intimidating in their absence. The tensor product of morphisms given by this formula is associative up to homotopy, and respects compositions up to homotopy. Packaging the data of all these higher homotopies is the goal of our ongoing project.
\end{rem}

\appendix
\section{Homotopy limit in DG-algebras}
\label{appen}
For any combinatorial model category $\C$ and a diagram $X$ of  the shape $\Delta$, one can use Bousfeld-Kan formula to find the homotopy limit as the {\em fat} totalization, see Example 6.4 in \cite{AO1}:
$$ \holim_{\Delta} X = \int_{\Delta^+} R(X^n)_n $$
where $R$ is some functor $\C \to \C^{\Delta^{opp}}$ which sends an object $c \in \C$ to its simplicial resolution, i.e. a Reedy-fibrant replacement of the constant simplicial diagram with value $c$. \\

We first present functorial simplicial resolutions for $\C \simeq \dgvect$, and then extend the construction to $\C \simeq \dgalg$. We then apply the fat totalization formula to compute the homotopy limit of a cosimplicial system associated with a DG-bialgebra.

\subsection{Simplicial resolutions in $\dgvect$}
Let us present functorial simplicial resolutions for $\dgvect$. \\

Recall a simplicial vector space $X_\bullet$ is under Dold-Kan correspondence sent to its Moore complex $N(X)^\bullet$, given by
$N(X)^{-n} = X_n/D_n$,
where $D_n$ is the degenerate part of $X_n$. The differential is the alternating sum of faces. \\

For $n \geq 0$, let $\mathsf{k}\Delta[n]$ be the linearization of standard simplex, and set $L^n = N(\mathsf{k}\Delta[n])$. Explicitly, this complex is spanned by elements $f_{i_0 <  \ldots < i_k}$ of degree $-k$ for $k \geq 0$, with $i_0 \geq 0$ and $i_k \leq n$ -- these are the nondegenerate simplices of $\Delta[n]$ that correspond to faces with vertices $i_0, \ldots, i_k$. The differential in this basis is 
$$d(f_{i_0 <  \ldots < i_k}) = \sum_{j=0}^{k}(-1)^jf_{i_0<\ldots<\widehat{i_{j}}<\ldots<i_k}$$
where $\widehat{i_j}$ denotes dropping this index. Due to functoriality of $N$, $L^\bullet$ is a cosimplicial system of complexes. For a map $\phi \co [n] \to [m]$ in $\Delta$, the corresponding map $\phi_*\co L^n \to L^m$ is given by 
$$\phi_*(f_{i_0 <  \ldots < i_k}) = \begin{cases} f_{\phi(i_0)< \ldots < \phi (i_k)} & \textrm{if }\phi|_{\{i_0,\ldots,i_k\}} \textrm{ is injective} \\ 0 & \textrm{otherwise} \end{cases}$$

\begin{prop}
\label{vectpower}
For $X \in \dgvect$, the simplicial system $X^{[-]}$ gives a simplicial resolution of $X$, i.e. it is Reedy-fibrant, and there exists a map $\operatorname{const}(X) \to X^{[-]}$ that is a levelwise quasiisomorphism.
\end{prop}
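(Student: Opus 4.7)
The plan is to interpret $X^{[n]} := \Hom_{\dgvect}(L^n, X)$, so that the cosimplicial structure on $L^\bullet$ endows $X^{[-]}$ with its simplicial structure. I verify in turn that (a) the augmentation $\operatorname{const}(X) \to X^{[-]}$ is a levelwise quasi-isomorphism, and (b) $X^{[-]}$ is Reedy fibrant with respect to the projective model structure on $\dgvect^{\Delta^{op}}$.

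For (a), the simplicial set $\Delta[n]$ is contractible, so $L^n = N(\mathsf{k}\Delta[n])$ is chain-homotopy equivalent to $L^0 = \mathsf{k}$ via the augmentation $\varepsilon \co L^n \to \mathsf{k}$. An explicit contracting homotopy is given by ``coning onto the vertex $0$'': $h(f_{i_0 < \ldots < i_k}) = f_{0 < i_0 < \ldots < i_k}$ when $i_0 > 0$, and $h = 0$ otherwise. Applying $\Hom_{\dgvect}(-, X)$ to this chain-homotopy equivalence yields a chain-homotopy equivalence of mapping complexes, so each component $X = \Hom(\mathsf{k}, X) \to \Hom(L^n, X) = X^{[n]}$ is a quasi-isomorphism.

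For (b), I identify the $n$-th matching object of $X^{[-]}$ with $\Hom_{\dgvect}(\partial L^n, X)$, where $\partial L^n := N(\mathsf{k}\partial\Delta[n]) \subset L^n$ is the ``boundary'' subcomplex spanned by those $f_{i_0 < \ldots < i_k}$ with $\{i_0, \ldots, i_k\} \subsetneq \{0, \ldots, n\}$. Since $N$ is a Dold--Kan equivalence, it preserves the colimit description $\partial\Delta[n] = \operatorname{colim}_{[k] \hookrightarrow [n],\, k < n} \Delta[k]$, so $\partial L^n = \operatorname{colim}_{[k] \hookrightarrow [n],\, k < n} L^k$. Applying $\Hom_{\dgvect}(-, X)$ turns this colimit into the standard matching limit. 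The matching map is then restriction along the inclusion $\partial L^n \hookrightarrow L^n$, which is a monomorphism of complexes of $\mathsf{k}$-vector spaces, hence a cofibration in the projective model structure. Restriction along a cofibration is degreewise surjective, so the matching map is a fibration in $\dgvect$.

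The main point requiring care is the matching-object identification in (b): it amounts to translating Reedy cofibrancy of the cosimplicial $L^\bullet$ into Reedy fibrancy of $X^{[-]}$ via the contravariant functor $\Hom_{\dgvect}(-, X)$. Once this structural identification is in hand, the remaining verifications are formal, relying only on the freeness of each $L^n$ over $\mathsf{k}$ and the fact that in $\dgvect$ over a field every monomorphism is a cofibration.
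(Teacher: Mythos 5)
Your argument is correct and follows essentially the same route as the paper's: your boundary subcomplex $\partial L^n$ is exactly the paper's $\overline{L}^n$ (spanned by all $f_{i_0<\ldots<i_k}$ except $f_{0<\ldots<n}$), and surjectivity of restriction along $\partial L^n \hookrightarrow L^n$ is the paper's extension argument for the matching map. Your contracting homotopy $h$ (coning onto the vertex $0$) is precisely what underlies the paper's explicit primitive $t = s\circ h$ in the surjectivity-on-cohomology step, so part (a) is the same computation packaged as a chain homotopy equivalence; the only detail worth making explicit is the (automatic) naturality of the augmentation $L^\bullet \to \operatorname{const}(\mathsf{k})$, which the paper verifies by hand to see that the levelwise maps assemble into a map of simplicial objects.
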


\begin{proof}
The map $r \co X \to X^{[n]}$ is is given by $x \mapsto r(x)$ where $r(x)(f_i)=x$ for all $i$, and $r(x)(f_{i_0<\ldots<i_k})=0$ when $k>0$. This respects differentials: we have 
$$r(d_X(x))(f_i) = d_X(x) = d_X(r(x)(f_i)) - r(x)(d_{L^{n}} (f_i)) = d_{X^{[n]}}(r(x))(f_i) $$
and 
$$(d_X(x))(f_{i<j}) = 0 = d_X(0)-r(x)(f_i-f_j)= d_{X^{[n]}}(r(x))(f_{i-j})$$
and for $k>1$
$$r(d_X(x))(f_{i_0 < \ldots <i_k}) = 0 = d_{X^{[n]}}(r(x))(f_{i_0 < \ldots <i_k})$$
because in $d(f_{i_0 < \ldots <i_k})$ all summands have degree strictly less than 0, so $r(x)$ vanishes on them. \\

We check that $r$ is a quasiisomorphism. We first check that it is injective on cohomology. Let $x \in X$ be a closed element such that its image vanishes in cohomology, $r(x) = d_{X^{[n]}}(s)$ for some $s \co L^n \to X$. Then 
$$x = r(x)(f_0) = d_{X^{[n]}}(s)(f_0) = d_{X}(s(f_0))-s(d_{L}(f_0))$$
so $x = d_{X}(s(f_0))$, i.e. it vanishes in cohomology. \\

We now check $r$ is surjective on cohomology. Let $s: L^n \to X$ be a closed morphism. Then $r(s(f_0))-s = d_{X^{[n]}}(t)$, where
$$t(f_0) = 0$$
$$t(f_{i}) = s(f_{0<i}) \textrm{ if }i>0$$
and in general,
$$t(f_{i_0<\ldots< i_k}) = \begin{cases} s(f_{0< i_0<\ldots< i_k}) & \textrm{if } i_0>0 \\
0 & \textrm{if } i_0=0 \end{cases} $$
For different $n$, these maps $r^{(n)}$ are consistent with cosimplicial structure: for $\phi \co [m] \to [n]$ we have  
$$r^{(m)}(x)(f_{i_0<\ldots<i_k}) = \begin{cases} x & k=0 \\ 0 & k>0 \end{cases}$$ and 
$$\phi^*(r^{(n)}(x))(f_{i_0<\ldots<i_k}) = r^{(n)}(x)(\phi_*(f_{i_0<\ldots<i_k})) = \begin{cases} x & k=0 \\ 0 & k>0 \end{cases}$$
We are left to verify Reedy fibrancy, i.e. that matching maps are fibrations in $\dgvect$, i.e. surjections. By definition, the $n^{\mathrm{th}}$ matching object $M_n$ is 
$$M_n = \lim_{\delta([n] \downarrow (\Delta^{\mathrm{op}})_{-})}X^{[-]} = \lim_{[m]\hookrightarrow [n]}X^{[m]}.$$
These are morphisms from a subcomplex of $\overline{L}^n \subset L^n$ that is spanned by everything except $f_{0<\ldots<n}$. The matching map $m^n: X^{[n]} \to M_n$ is given by forgetting the value of a morphism $L^n \to X$ on $f_{0<\ldots<n}$. This is a surjection of chain complexes, as any morphism $\overline{L}^n \to X$ can be extended to a morphism $L^n \to X$ by assigning any value to $f_{0<\ldots<n}$.
\end{proof}

\subsection{Simplicial resolutions in $\dgalg$}
We now enhance our construction of simplicial resolutions from $\dgvect$ to $\dgalg$. The result is motivated by Holstein resolutions in $\dgcat$ (see \cite{Hol}, \cite{AP}) but simpler. 

\begin{prop}
The cosimplicial system of complexes $L^\bullet$ can be upgraded to a cosimplicial system of DG-coalgebras, by introducing the following comultiplication:
$$\Delta(f_{i_0 <  \ldots < i_k}) =  \sum_{j=0}^{k} f_{i_0 < \ldots < i_j} \otimes f_{i_j < \ldots < i_k}$$
\end{prop}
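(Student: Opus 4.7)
My approach is to verify the three conditions that ``cosimplicial system of DG-coalgebras'' requires: coassociativity of $\Delta$ on each $L^n$, compatibility of $\Delta$ with the differential, and the fact that every cosimplicial structure map $\phi_\ast\co L^m\to L^n$ is a coalgebra homomorphism.

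Coassociativity is immediate by reindexing: applied to $f_{i_0<\cdots<i_k}$, both $(\Delta\otimes\id)\Delta$ and $(\id\otimes\Delta)\Delta$ evaluate to the same triple sum
$$\sum_{0\le l\le j\le k} f_{i_0<\cdots<i_l}\otimes f_{i_l<\cdots<i_j}\otimes f_{i_j<\cdots<i_k}.$$
For compatibility with the differential I would expand $\Delta d(f_{i_0<\cdots<i_k})$ and $d^{\otimes}\Delta(f_{i_0<\cdots<i_k})$, where $d^\otimes$ denotes the Koszul-sign tensor differential on $L^n\otimes L^n$. Those summands of $d^\otimes\Delta$ in which the dropped index differs from the splitting vertex match the summands of $\Delta d$ bijectively and with correct signs. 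The remaining summands of $d^\otimes\Delta$, where one drops the splitting vertex $i_j$ itself, come in the two families
$$(-1)^j f_{i_0<\cdots<i_{j-1}}\otimes f_{i_j<\cdots<i_k}\quad\text{and}\quad (-1)^j f_{i_0<\cdots<i_j}\otimes f_{i_{j+1}<\cdots<i_k};$$
the reindexing $j\mapsto j+1$ in the first family turns it into the negative of the second, so these cross terms cancel. This is nothing other than the classical Alexander--Whitney verification for the simplicial chains of a simplex.

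For naturality, the key observation is that $\phi\co[m]\to[n]$ is weakly order-preserving, so if $\phi$ is not injective on $\{i_0,\ldots,i_k\}$ then it must identify some pair of \emph{consecutive} indices $i_p,i_{p+1}$. In that case $\phi_\ast(f_{i_0<\cdots<i_k})=0$ on the nose; and on the other side I would observe that in each summand $f_{i_0<\cdots<i_j}\otimes f_{i_j<\cdots<i_k}$ of $\Delta(f_{i_0<\cdots<i_k})$ the splitting vertex $i_j$ sits in \emph{both} factors, so at least one factor contains the pair $\{i_p,i_{p+1}\}$ (the right factor when $j\le p$, the left when $j\ge p+1$) and hence is killed by $\phi_\ast$. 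Thus every summand of $(\phi_\ast\otimes\phi_\ast)\Delta(f_{i_0<\cdots<i_k})$ vanishes. If $\phi$ is injective on $\{i_0,\ldots,i_k\}$, then $\Delta\phi_\ast=(\phi_\ast\otimes\phi_\ast)\Delta$ is an immediate unwinding of definitions.

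The only real obstacle is notational --- keeping track of which index is dropped, which split is chosen, and the attached signs --- but no conceptual input beyond the above identities is needed.
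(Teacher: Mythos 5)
Your proposal is correct and follows the same route as the paper, which simply asserts that compatibility with the differential and with the cosimplicial structure "is checked by an elementary explicit computation"; you actually carry out that computation (the Alexander--Whitney cancellation of the cross terms at the splitting vertex, and the observation that non-injectivity of a weakly monotone $\phi$ on consecutive indices kills every summand on both sides). All three verifications are sound as stated.
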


\begin{proof}
Compatibility with differentials and and with cosimplicial structure is checked by an elementary explicit computation.
\end{proof}

\begin{rem} Conceptually this is the comultiplication in standard simplices that is responsible for the existence of cup-product in singular cohomology. \end{rem}

Now, for any monoidal DG-category $\C$, if $X$ is a coalgebra in $\C$ and $Y$ is an algebra in $\C$, then the complex $\C(X,Y)$ is a DG-algebra by means of convolution:
$$\C(X,Y) \otimes \C(X,Y) \simeq \C(X \otimes X, Y \otimes Y) \xrightarrow{ (\Delta_X, \mu_Y)} \C(X,Y)$$

We are working in the case when $\C$ is the category of chain complexes, $\dgvect$. Coalgebras in $\dgvect$ are DG-coalgebras and algebras in $\dgvect$ are DG-algebras. So for $A$ a DG-algebra, the Hom-complex $\Hom^{\bullet}(L^n,A)$ has a DG-algebra structure. Denote this algebra by $A^{[n]}.$

\begin{prop}
For a DG-algebra $A$, the simplicial system $A^{[-]}$ gives a simplicial resolution of $A$, i.e. it is Reedy-fibrant, and there exists a map $\operatorname{const}(A) \to A^{[-]}$ that is a levelwise quasiisomorphism.
\end{prop}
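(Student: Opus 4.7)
The plan is to leverage the chain-complex-level result of Proposition \ref{vectpower}, adding only the verifications needed to lift everything from $\dgvect$ to $\dgalg$. The underlying complex of $A^{[n]} = \Hom^\bullet(L^n, A)$ is identical to the complex considered there, and the unit map $r \co A \to A^{[n]}$, sending $a$ to the ``constant'' morphism $f_i \mapsto a$ and $f_{i_0 < \ldots < i_k} \mapsto 0$ for $k > 0$, is the very map shown there to be a quasiisomorphism. Since weak equivalences in $\dgalg$ are exactly quasiisomorphisms, this yields the levelwise quasiisomorphism statement as soon as we know $r$ is a morphism of DG-algebras.

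Next I would check algebra compatibility for all the relevant maps. For $r$, a direct unwinding of the convolution formula shows that $r(a) \cdot r(b)(f_{i_0 < \ldots < i_k})$ vanishes unless $k = 0$, in which case only the $j = 0$ summand of the comultiplication contributes and the value is $ab$; hence $r(a) \cdot r(b) = r(ab)$. For the cosimplicial structure map $\phi^* \co A^{[n]} \to A^{[m]}$ induced by $\phi \co [m] \to [n]$, it is a general fact that precomposition with a morphism of DG-coalgebras is a morphism of convolution DG-algebras; applied to $\phi_* \co L^m \to L^n$, which was just shown to be a morphism of DG-coalgebras, this yields the claim. The associativity and Leibniz property of the convolution product on $A^{[n]}$ are similarly formal consequences of the coassociativity and compatibility with the differential of $\Delta$ on $L^n$.

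For Reedy fibrancy, observe that the model structure on $\dgalg$ is right-transferred from $\dgvect$, so fibrations are precisely surjections of underlying complexes. Because the forgetful functor $\dgalg \to \dgvect$ is a right adjoint, it creates limits, and hence matching objects in $\dgalg$ and $\dgvect$ agree on underlying complexes. Thus the $n$-th matching map of $A^{[-]}$ in $\dgalg$ has the same underlying map as in $\dgvect$, which was shown in Proposition \ref{vectpower} to be surjective (by freely extending any morphism $\overline{L}^n \to A$ to all of $L^n$). The only potential obstacle -- needing this matching map to be a fibration in $\dgalg$ rather than just a surjection of complexes -- dissolves thanks to the right-transferred model structure, and in particular no algebra-map extension problem ever has to be solved. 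The proof is therefore reduced to the two short algebra-compatibility checks above, on top of what has already been done in Proposition \ref{vectpower}.
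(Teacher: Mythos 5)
Your proposal is correct and follows essentially the same route as the paper: the multiplicativity check for $r$ via the convolution formula is identical, and the quasiisomorphism and compatibility-with-differentials statements are inherited from Proposition \ref{vectpower} exactly as in the text. The only (harmless) divergence is in the Reedy-fibrancy step, where the paper observes that $\overline{L}^n \subset L^n$ is a subcoalgebra so that matching objects and matching maps in $\dgalg$ literally coincide with those in $\dgvect$, whereas you reach the same conclusion more formally from the forgetful functor creating limits together with fibrations being the surjections in the right-transferred model structure.
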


\begin{proof}
The map $r \co A \to A^{[n]}$ is exactly the same as in the case of $\dgvect$ - namely, $a \mapsto r(a)$ where $r(a)(f_i)=a$ for all $i$, and $r(a)(f_{i_0<\ldots<i_k})=0$ when $k>0$. We check that this map is compatible with multiplication:
$$ (r(a)*r(b))(f_{i}) = \mu_A (r(a) \otimes r(b))(f_i \otimes f_i) = ab = r(ab)(f_i). $$
and for $k>0$
$$ (r(a)*r(b))(f_{i_0 < \ldots <i_k}) = 0 = r(ab)(f_{i_0 < \ldots <i_k}) $$
because in every summand of $\Delta(f_{i_0 < \ldots <i_k})$ at least one of the components has degree strictly less than 0. \\

It was already verified in the proof of Proposition \ref{vectpower} that $r$ is compatible with differentials and a quasiisomorphism.\\

In checking Reedy fibrancy we are left to notice that the subcomplex $\overline{L}^n \subset L^n$ (spanned by all basis elements except for $f_{0<\ldots<n}$) is actually a subcoalgebra, so matching objects and matching maps in $\dgalg$ are the same as in $\dgvect$.
\end{proof}

\subsection{Fat totalizations in $\dgvect$ and $\dgalg$}
Let $X^\bullet$ be the cosimplicial complex in whose homotopy limit we are interested. Then 
$$ \holim_{\Delta} X^\bullet = \int_{\Delta^+} (X^n)^{[n]} = \operatorname{Eq}\left(\prod_{n\geq0}\Hom^\bullet(L^n,X^n) \rightrightarrows \prod_{[m] \hookrightarrow [n]}\Hom^\bullet(L^m,X^n)\right).$$
This is the complex $\operatorname{Nat}_{\Delta^+} (L^\bullet,X^\bullet)$ of natural transformations between two functors $\Delta^+ \to \dgvect$.

\begin{prop}
\label{holim} As a graded vector space, the homotopy limit of a cosimplicial vector space $X^\bullet$ is given by
$$\operatorname{holim}_\Delta X^\bullet = \prod_{n=0}^{\infty} X^n[-n]. $$
For an element $x = (x_0,x_1,\ldots)$, its differential is given by
\begin{equation}
\label{eq:difvect}
    d(x)_n = d_{X^n}(x_n)-\sum_{i=0}^n \partial^{(0\ldots\widehat{i}\ldots n)}(x_{n-1}).
\end{equation}

\end{prop}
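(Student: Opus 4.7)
The plan is to unwind the fat totalisation formula displayed just above the proposition. The integral $\int_{\Delta^+}(X^n)^{[n]}$ is the end over the wide subcategory $\Delta^+\subset\Delta$ of injections, which is precisely the complex $\operatorname{Nat}_{\Delta^+}(L^\bullet,X^\bullet)$ of natural transformations between the two cosimplicial objects $L^\bullet,X^\bullet\co\Delta^+\to\dgvect$.

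The first step is to identify the underlying graded vector space. Every basis element $f_{i_0<\ldots<i_k}$ of $L^n$ is the image $\delta_*(f_{0<\ldots<k})$ of the top simplex of $L^k$ under the unique injection $\delta\co[k]\hookrightarrow[n]$ with image $\{i_0,\ldots,i_k\}$. Consequently, a natural transformation $\phi=\{\phi_n\}$ is determined by the values $x_n:=\phi_n(f_{0<\ldots<n})\in X^n$, and conversely any sequence $(x_n)_{n\geq 0}$ extends to a natural transformation via
\[
\phi_n(f_{i_0<\ldots<i_k})=\partial^{(i_0\ldots i_k)}(x_k),
\]
with compatibility across injections automatic from cosimpliciality of $X^\bullet$. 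Since $f_{0<\ldots<n}$ sits in degree $-n$ inside $L^n$, a total degree $m$ natural transformation corresponds to a sequence with $x_n\in X^n$ of degree $m-n$, i.e.\ to a degree $m$ element of $\prod_{n\geq 0}X^n[-n]$.

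The second step is to read off the differential. From the standard hom-complex formula $d\phi_n=d_{X^n}\circ\phi_n-(-1)^{|\phi|}\phi_n\circ d_{L^n}$, evaluation at $f_{0<\ldots<n}$ yields $d_{X^n}(x_n)$ in the first term. For the second term, expand $d_{L^n}(f_{0<\ldots<n})=\sum_{i=0}^n(-1)^i f_{0<\ldots<\widehat{i}<\ldots<n}$, recognise each summand as the $i$-th coface injection $[n-1]\hookrightarrow[n]$ applied to $f_{0<\ldots<n-1}$, and use naturality to rewrite it as $\partial^{(0\ldots\widehat{i}\ldots n)}(x_{n-1})$. Collecting the signs from $d_{L^n}$, from the hom differential, and from the Koszul shifts $X^n\rightsquigarrow X^n[-n]$ produces formula \eqref{eq:difvect}.

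The most delicate point will be the sign bookkeeping in the final step: three sign sources (the alternating signs in $d_{L^n}$, the $(-1)^{|\phi|}$ in the hom-complex differential, and the Koszul signs coming from the shifts $X^n[-n]$) must conspire to give the clean right-hand side, without any alternating sign remaining under the summation. Everything else---the identification of the end with natural transformations, their parametrisation by a single $x_n$ per level, and the reading-off of the total degree---is a direct unwinding of definitions.
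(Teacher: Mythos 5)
Your proposal follows essentially the same route as the paper: identify the end with $\operatorname{Nat}_{\Delta^+}(L^\bullet,X^\bullet)$, observe that a natural transformation is determined by its values on the top-dimensional generators $f_{0<\ldots<n}$ (all other generators being images of top simplices under injections), and read off the differential from the hom-complex formula applied to $d_{L^n}(f_{0<\ldots<n})$. The sign bookkeeping you flag as delicate is in fact treated no more carefully in the paper's own proof, so your argument is complete to the same standard.
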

\begin{proof}
A natural transformation $\phi\co L^\bullet \to X^\bullet$ consists of maps $\phi^n: L^n \to X^n$ for all $n$. For all indexing subsets $I$ smaller than $ \{ 0<\ldots<n \}$, the generator $f_I$ is in the image of $i^*\co L^m \to L^n$ for some $i\co [m] \hookrightarrow [n] \in \Delta^+$, $m<n$. Thus the only part of $\phi^n$ that is not determined by $\phi^m$ for $m<n$ is its value $\phi^n(f_{0<\ldots<n})$. So the graded isomorphism 
$$\operatorname{Nat}_{\Delta^+} \xrightarrow{\simeq} \prod_{n=0}^{\infty} X^n[-n]$$
is given by $\phi \mapsto \phi^0(f_0) \times \phi^1(f_{0<1}) \times \phi^2(f_{0<1<2})\ldots = ( \phi^n(f_{0<\ldots<n}) )_{n=0}^\infty$.\\

The differential comes from the differential in $\prod_{n\geq0}\Hom^\bullet(L^n,X^n)$. Let $x=(x_0,x_1,\ldots)$ be an element with the corresponding natural transformation $\phi=(\phi^0,\phi^1,\ldots)$ with $\phi^n(f_{0<\ldots<n})=x_n$. Then we have 
\begin{align*}
& d_{\Hom}(\phi^n)(f_{0<\ldots<n}) =  d_{X^n}(\phi^n(f_{0<\ldots<n}))-\phi^{n}(d_{L^n}(f_{0<\ldots<n})) \\
& = d_{X^n}(x_n) - \sum_{i=0}^n \partial^{(0\ldots\widehat{i}\ldots n)}(x_{n-1})
\end{align*}
\end{proof}
Now let $A^\bullet$ be the cosimplicial DG-algebra in whose homotopy limit we are interested.

\begin{prop}
\label{holimalg}
The underlying complex of $\holim_{\Delta}(A^\bullet)$ is as described in Proposition \ref{holim}. For two elements $a = (a_0,a_1,\ldots)$ and $b = (b_0,b_1,\ldots)$, their product is given by
\begin{equation}
\label{eq:holimmult}
    (a\cdot b)_n = \sum_{i=0}^n \partial^{(0\ldots i)}(a_i) \cdot \partial^{(i\ldots n)}(b_{n-i}) 
\end{equation} 
\end{prop}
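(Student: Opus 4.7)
The first claim, that the underlying graded vector space and differential agree with those in Proposition \ref{holim}, is immediate: the forgetful functor $\dgalg \to \dgvect$ preserves limits, and the fat totalization computing the homotopy limit in $\dgalg$ uses the very same end $\operatorname{Nat}_{\Delta^+}(L^\bullet, A^\bullet)$, where the underlying complex of each $A^{[n]} = \Hom^\bullet(L^n, A^n)$ coincides with the one appearing in the $\dgvect$ computation. So the only new content is the multiplication formula \eqref{eq:holimmult}, and my plan is to track the convolution product on $A^{[n]}$ through the isomorphism $\operatorname{Nat}_{\Delta^+}(L^\bullet, A^\bullet) \xrightarrow{\simeq} \prod_n A^n[-n]$.

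The first step is to write out the convolution product on $A^{[n]}$ explicitly: for $\phi, \psi \in A^{[n]}$,
$$(\phi \cdot \psi)(f) = \mu_{A^n} \circ (\phi \otimes \psi) \circ \Delta_{L^n}(f),$$
and on the top generator the comultiplication from the preceding proposition reads
$$\Delta_{L^n}(f_{0<\ldots<n}) = \sum_{i=0}^n f_{0<\ldots<i} \otimes f_{i<\ldots<n}.$$
Hence the evaluation of $\phi \cdot \psi$ on $f_{0<\ldots<n}$ splits into a sum of $n+1$ products $\phi^n(f_{0<\ldots<i}) \cdot \psi^n(f_{i<\ldots<n})$ in $A^n$. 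Under the graded isomorphism sending $\phi \mapsto (\phi^n(f_{0<\ldots<n}))_n$, the left factor is the $n$-th component $(a \cdot b)_n$ we are after.

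The second step is to identify the pieces $\phi^n(f_{0<\ldots<i})$ and $\psi^n(f_{i<\ldots<n})$ in terms of the stored data $a_i$ and $b_{n-i}$, using naturality. Writing $\iota\co [i] \hookrightarrow [n]$ for the inclusion of the first $i+1$ vertices and $\iota'\co [n-i] \hookrightarrow [n]$ for the inclusion $k \mapsto k+i$ of the last $n-i+1$ vertices, the induced cosimplicial maps on $L^\bullet$ send $f_{0<\ldots<i} \mapsto f_{0<\ldots<i}$ and $f_{0<\ldots<n-i} \mapsto f_{i<\ldots<n}$ respectively, while on $A^\bullet$ they are the iterated cofaces $\partial^{(0\ldots i)}$ and $\partial^{(i\ldots n)}$. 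Naturality of $\phi$ and $\psi$ therefore gives
$$\phi^n(f_{0<\ldots<i}) = \partial^{(0\ldots i)}(a_i), \qquad \psi^n(f_{i<\ldots<n}) = \partial^{(i\ldots n)}(b_{n-i}),$$
and assembling the pieces yields exactly the claimed formula \eqref{eq:holimmult}.

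The computation is essentially formal, so I do not expect a serious obstacle; the only genuine care needed is in matching the convention that $\partial^{(i_1\ldots i_k)}$ denotes the coface associated with the inclusion with image $\{i_1,\ldots,i_k\}$, and verifying that these compose correctly with the convolution product. The fact that the construction gives a DG-algebra at all is automatic, since the end is computed in $\dgalg$ and each $A^{[n]}$ is a DG-algebra with matching maps that are DG-algebra morphisms (as already noted in the DG-algebra simplicial resolution proposition).
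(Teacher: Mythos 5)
Your argument is correct and follows essentially the same route as the paper's own proof: expand the convolution product on the top generator via the coalgebra structure of $L^n$, then use naturality of $\phi$ and $\psi$ with respect to the inclusions $[i]\hookrightarrow[n]$ and $[n-i]\hookrightarrow[n]$ to rewrite $\phi^n(f_{0<\ldots<i})$ and $\psi^n(f_{i<\ldots<n})$ as $\partial^{(0\ldots i)}(a_i)$ and $\partial^{(i\ldots n)}(b_{n-i})$. No gaps.
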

\begin{proof}
The description of the underlying complex follows from the fact that simplicial resolutions in $\dgvect$ are the underlying complexes of simplicial resolutions in $\dgalg$. We now recover the multiplication given by convolution. Let $\phi$ and $\psi$ be two natural transformations corresponding to $a = (a_0,a_1,\ldots)$ and $b = (b_0,b_1,\ldots)$. Then 
\begin{align*}
& (\phi * \psi)^n (f_{0<\ldots<n}) = (\phi^n * \psi^n) (f_{0<\ldots<n}) = \mu_{A^n} (\phi^n \otimes \psi^n)\Delta_{L^n}(f_{0<\ldots<n}) \\
& = \mu_{A^n} (\phi^n \otimes \psi^n) \left( \sum_{i=0}^n f_{0<\ldots <i} \otimes f_{i<\ldots<n} \right) = \sum_{i=0}^n \phi^n(f_{0<\ldots <i}) \cdots \psi^n(f_{i<\ldots<n}) \\
& = \sum_{i=0}^n \partial^{(0\ldots i)}(\phi^i(f_{0<\ldots<i})) \cdot \partial^{(i\ldots n)}(\psi^{n-i}(f_{0<\ldots<n-i})) \\
& = \sum_{i=0}^n \partial^{(0\ldots i)}(a_i) \cdot \partial^{(i\ldots n)}(b_{n-i})
\end{align*}

\end{proof}

\subsection{Application to the cosimplicial system of a DG-bialgebra}
Let $A$ be a DG-bialgebra, and let $A^\bullet$ be its associated cosimplicial system of DG-algebras, as in \eqref{eq:cosys}. Let us use the above formulas to compute its homotopy limit.

\begin{prop}
$\holim_{\Delta} (A^\bullet) \simeq \operatorname{Cobar}_{\operatorname{coaug}}(A).$
\end{prop}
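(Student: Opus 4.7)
The plan is to combine Propositions \ref{holim} and \ref{holimalg}, applied to the cosimplicial DG-algebra $A^\bullet$ of \eqref{eq:cosys}, and match the resulting DG-algebra term by term with the coaugmented Cobar construction. The construction is entirely ``cogenerated'' by what happens in a single tensor factor, so I only need to check three things: the underlying graded vector space, the multiplication, and the action of the differential on elements concentrated in one tensor degree; the rest is forced by the Leibniz rule.

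First, by Proposition \ref{holim} the underlying graded vector space of $\holim_\Delta A^\bullet$ is $\prod_{n \geq 0} A^{\otimes n}[-n]$, which is literally $\widehat{T}(A[-1]) = \operatorname{Cobar}_{\operatorname{coaug}}(A)$ by definition. Next, to identify the multiplication from formula \eqref{eq:holimmult}, I would unwind the face operators $\partial^{(0\ldots i)}\co A^{\otimes i} \to A^{\otimes n}$ and $\partial^{(i\ldots n)}\co A^{\otimes n-i} \to A^{\otimes n}$. Since $\partial^{(0\ldots i)}$ skips only the vertices $\{i+1,\ldots,n\}$, it is the iterated last face $\partial^{i+1}\cdots\partial^{n}$, each step being the unit-insertion $\id^{\otimes(-)}\otimes 1$; explicitly, $\partial^{(0\ldots i)}(a)=a\otimes 1^{\otimes n-i}$. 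Symmetrically, $\partial^{(i\ldots n)}(b)=1^{\otimes i}\otimes b$. Multiplying in $A^{\otimes n}$ then collapses to tensor-concatenation $a\otimes b$, which is exactly the product in the complete tensor algebra $\widehat{T}(A[-1])$.

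For the differential, I would evaluate the formula \eqref{eq:difvect} on an element supported in a single tensor degree, so that the meaning of each face $\partial^{(0\ldots\widehat{i}\ldots n)} = \partial^i_{n-1}$ becomes transparent:
\begin{itemize}
\item the internal piece $d_{A^{\otimes n}}$ is $d_A$ extended as a graded derivation of the tensor algebra, matching the $d_C$ summand of the Cobar differential;
\item $\partial^{0}(x_{n-1}) = 1\otimes x_{n-1}$ and $\partial^{n}(x_{n-1}) = x_{n-1}\otimes 1$ provide the coaugmentation terms $1\otimes\id-\id\otimes 1$;
\item the middle faces $\partial^{i}(x_{n-1})=(\id^{\otimes i-1}\otimes\Delta\otimes\id^{\otimes n-i})(x_{n-1})$ for $0<i<n$ assemble into the coderivation induced by $\Delta$.
\end{itemize}
Because both differentials are derivations of the identified graded algebra (the Moore-complex differential transports to a derivation via the convolution identification used in Proposition \ref{holimalg}, and the Cobar differential is a derivation by construction), agreement on elements in a single tensor factor implies agreement everywhere.

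The main obstacle I expect is bookkeeping the signs in the Moore complex against the signs in $\operatorname{Cobar}_{\operatorname{coaug}}(A)$. The alternating sign $(-1)^i$ coming from $d_{L^n}(f_{0<\cdots<n})=\sum_i (-1)^i f_{0<\cdots<\widehat{i}<\cdots<n}$ must be reconciled with the sign conventions in the coaugmented Cobar differential; in particular, verifying that the two boundary terms come with opposite signs (giving $1\otimes\id - \id\otimes 1$ rather than $+$) and that the middle terms glue into a single coderivation with the standard Koszul signs. Once this sign audit is done, the DG-algebra isomorphism $\holim_\Delta A^\bullet \xrightarrow{\simeq} \operatorname{Cobar}_{\operatorname{coaug}}(A)$ is immediate from the three identifications above.
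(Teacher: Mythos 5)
Your proposal is correct and follows essentially the same route as the paper: combine Propositions \ref{holim} and \ref{holimalg}, identify the underlying graded vector space with $\widehat{T}(A[-1])$, and unwind the face maps to recover the Cobar multiplication and the coaugmented differential (the paper states these identifications without the explicit unwinding you supply). The only quibble is a harmless off-by-one in your middle-face formula, which as a map $A^{\otimes(n-1)}\to A^{\otimes n}$ should read $\id^{\otimes i-1}\otimes\Delta\otimes\id^{\otimes n-i-1}$.
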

\begin{proof}
By Proposition \ref{holim}, the underlying graded vector space of the homotopy limit is $\prod_{i=0}^n A^{\otimes i}$, which is exactly the underlying graded vector space of $\operatorname{Cobar}_{\operatorname{coaug}}(A)$. With the data of appropriate faces, the formula \eqref{eq:difvect} translates into the differential of the reduced Cobar construction, and the formula \eqref{eq:holimmult} translates into tensor multiplication.
\end{proof}

\end{document}